\documentclass{article}
\usepackage{amsmath}
\usepackage{mathtools}
\usepackage{amsfonts}
\usepackage{mathrsfs}
\usepackage{amsthm}
\usepackage{fullpage}
\title{Hypocoercivity in Wasserstein-1 for the kinetic Fokker-Planck equation via Malliavin Calculus.}
\author{Josephine Evans}

\newtheorem{thm}{Theorem}

\newtheorem{lemma}{Lemma}
\newtheorem{prop}{Proposition}
\newtheorem*{remark}{Remark}
\newtheorem{assumption}{Assumption}

\begin{document}
\maketitle
\begin{abstract}
We study the kinetic Fokker-Planck equation on the whole space with a confining potential. We show quantitative rates of exponential convergence to equilibrium in a well chosen Wasserstein-1 distance. We use the Wasserstein-1 version of Harris's theorem introduced by Hairer and Mattingly. We make use of similarities between hypocoercivity and hypoellipticity in order to use Malliavin calculus to see hypocoercivity for this equation on the level of the SDE.
\end{abstract}
\section{Inroduction}

Hypocoercivity was introduced by Villani in \cite{V09}. An equation is hypocoercive if we can show quantitative exponential rates of convergence of a solution of an equation, $f(t)$, towards equilibrium, $\mu$, of the form
\[ d(f(t),\mu) \leq C e^{-\lambda t} d(f(0), \mu),  \] where $C, \lambda$ are explicitly computable strictly positive constants. Hypocoercivity is almost always studied in the context of spatially inhomogeneous kinetic equations.

 In this paper we look at one of the first equations which was studied in the context of hypocoercivity, the kinetic Fokker-Planck or Langevin equation
\[ \partial_t f + v \cdot \nabla_x f - \nabla_x U \cdot \nabla_v f = \Delta_v f + \nabla_v \cdot (vf). \] Here $\mu = M \exp(-|v|^2/2 - U(x))$ for some normalising constant $M$. Hypocoercivity for the kinetic Fokker-Planck equation has been shown by many authors. It was shown in $L^2(\mu^{-1})$ in \cite{HN04}. This paper then inspired the m\'{e}moire of Villani \cite{V09} where he proves a general theorem in the first section which he then applies to the kinetic Fokker-Planck. The $L^2$ and $H^1$ results are also given as special cases of the theorems proven in \cite{DMS15} and \cite{NM06} respectively.

The kinetic Fokker-Planck equation is an equation in the sum of squares form given in \cite{V09} with $B= v \cdot \nabla_x - \nabla_x U \cdot \nabla_v$ and $A = -\nabla_v $. Then
\[ \partial_t f + Bf +A^*Af = 0.  \]
This equation is also hypoelliptic. The hypocoercivity and hypoellipticity of some degenerate diffusions can be proved using similar techniques and the name hypocoercivity was inspired by this similarity. The main examples of this is the paper \cite{HN04} where they prove hypocoercivity and hypoellipticity simultaneously using pseudo differential techniques and the new proof of hypoellipticity for the kinetic Fokker-Planck equation given in \cite{H17}. The link is expressed clearly in \cite{H17}. These proofs of both hypocoercivity and hypoellipticity for kinetic Fokker-Planck equation use crucially the fact that
\[ [B,A] = - \nabla_x.\]  More generally both hypocoercivity and hypoellipticity rely on the diffusion being spread to the other direction seen by taking successive iterated commutators between the vector fields \cite{H67}.

 Some degenerate diffusions equations are also the Kolmogorov backwards equations for the law of the SDE
\[ \mathrm{d}Z_t = \sum_i\tilde{A}_i \mathrm{d}W^i_t + \tilde{B} \mathrm{d}t. \] Where the tilde vector fields are closely related to the the ones appearing in the PDE. In \cite{V09} (Part 1, Prop 5) Villani shows that all SDEs which converge to an equilibrium state have backwards equations which can be written in the form
\[ \partial_t f + \sum_i A_i^* A_i f + B f = 0.  \] This is the form for which it is possible to state his hypocoercivity theorem. Here the vector fields are different to those in the It\={o} SDE form of the equation. Hypoellipticity has been understood on the level of SDEs via Malliavin calculus see for example \cite{M78, N86}. The machinery of Malliavin calculus allow one to see how the effect of the Brownian motions is transferred along different directions given by the iterated commutators of the driving vector fields.

Kinetic Fokker-Planck equations were shown to converge to equilibrium in \cite{HMS02} using techniques from \cite{MT93}. These works use probabilistic techniques, relying on Harris's Theorem which gives exponential convergence to equilibrium based on a Lyapunov condition and a minorization condition. The minorization condition is typically of the form that for all $R$ there exists some probability measure $\nu$ and constant $\alpha$ such that for all $z$ in $B(0,R)$ we have
\[ f_t^z \geq \alpha \nu. \] Here $f_t^z$ is the solution to the PDE at time $t$, with initial condition $\delta_z$.

These proofs do not give explicit constants and this lack of quantifiability arises when showing the minorisation condition. They first show that $f_t^z$ has a density using hypoellipticity theory. Then they show via control theory that for some compact $C$ then there is some $y \in C$ such that for any $\delta$ we have $t_1(\delta)$ with
\[\mathcal{P}_{t_1}(x, B_\delta(y)) >0 \hspace{10pt} \forall x \in C. \] They then use these to prove a minorisation condition. Its not clear how to make this argument quantitative as it would require us to be able to estimate $p_t(x,y)$ from below at a specific point and uses compactness arguments. As the proof of hypoellipticity can be made using Malliavin calculus it makes sense to ask whether the minorisation condition can be shown directly and quantitatively using Malliavin calculus. This would then allow one to prove hypocoercivity for the SDE quantitatively on the level of the SDE itself rather than via the PDE. Convergence to equilibrium in Wasserstein for the kinetic Fokker-Planck equation is shown very nicely in \cite{EGZ17} by a direct coupling approach. In \cite{EGZ17} they use a Lyapunov structure to show that the solution concentrates in the centre of the state space. Within this centre they show contraction in Wasserstein by using a mixture of reflection and synchronisation couplings. In this setting the reflection coupling should push the $x$ coordinates of the processes towards each other and the synchronisation coupling should push the $v$-coordinates towards each other The final result of this paper is very similar to the one given here. However, our techniques for looking at the behaviour in the centre of the space are very different. We use a much less trajectorial viewpoint. This means we are unlikely to get as sharper constants as with a coupling approach. It does allow us to see how we are exploiting the hypoelliptic structure of the equations more clearly. 

We could not show something as strong as the minorisation condition quantitatively. This is because we use Malliavin calculus to approximate our solutions by Gaussians for which spreading out in all directions is clear but we then get an error from this process which is not bounded in $L^\infty$ as we would need to show minorisation. However this error is sufficiently well behaved that we can bound below the probability that any two solutions to the SDE started within a compact will be within a distance $\delta$ from each other at some time $T$, i.e.
\[ \inf_{ |x|, |y| \leq C} \sup_{\Gamma \in \mathcal{C} ( \mathcal{P}^*_T \delta_x, \mathcal{P}^*_T \delta_y)} \Gamma \{ (x', y') : d(x', y') < \delta \} \geq a. \] Where
\[ \mathcal{C} ( \mathcal{P}^*_T \delta_x, \mathcal{P}^*_T \delta_y) \] is the set of couplings of the solutions at time $T$. This is one of the assumptions of the Wasserstein-1 version of Harris's theorem proved by Hairer and Mattingly in \cite{HM08} to show spectral gaps in Wasserstein for the stochastically forced Navier-Stokes equation.

In order to show the required condition to use Hairer and Mattingly's version of Harris's theorem we need to show that we can construct a coupling so that any two solutions which begin in the centre of the space will move towards each other with positive probability. Since this has to be true for any two solutions our goal is to show that the law of the solutions are spreading out in every direction. It may appear that as noise enters only at the level of velocity in the kinetic Fokker-Planck equation that the law will only spread out in velocity directions. However, the transport operator will mix this to the spatial directions. We need to quantify this effect. Malliavin calculus should help us do this. The Malliavin derivative tells us how the driving Brownian motion affects the solution to the SDE. We use Malliavin calculus to approximate the solution to the SDE by a Gaussian process. This Gaussian process spreads out in all directions, we see the noise passing through iterated commutators of the driving vector fields here. This then allows us to quantitatively verify the hypothesis of Hairer and Mattingly's version of Harris's theorem.

 Therefore the goal is to show exponentially fast convergence to equilibrium in a weighted Wasserstein-1 distance for the kinetic-Fokker Planck or Langevin equation 
\begin{align} \label{SDE}
\mathrm{d}X_t =& V_t \mathrm{d}t, \\ \mathrm{d}V_t =& -(V_t+ \nabla_xU(X_t)) \mathrm{d}t + \sqrt{2} \mathrm{d}W_t.
\end{align} 

The plan of the paper is as follows. We first introduce Hairer and Mattingly's version of Harris's theorem. Then we state our main theorem. We then verify the three assumptions of Hairer and Mattingly's Harris theorem, and show how they contribute to contractivity of the semigroup. The first two assumptions are relatively straight forward though slightly technically involved. For the second assumption we use a version of Bakry-Emery calculus. The third assumption is the key to this proof and where we use Malliavin calculus. We first introduce the tools from the theory of Malliavin calculus for general SDEs before returning to verifying this assumption.
\section{Harris's theorem in Wasserstein}
 We are going to use the version of Harris's theorem in a Wasserstein-1 distance proved by Hairer and Mattingly in \cite{HM08} for use in giving explicit rates of convergence to equilibrium for the 2D Navier-Stokes equation.  We first introduce the distance for some function $L$
\[  \rho_r (x,y) = \inf_\gamma \int_0^1 L^r(\gamma(t)) \|\dot{\gamma}(t) \| \mathrm{d}t,\] where $r$ is an exponent and the infimum runs over all paths $\gamma$ between $x$ and $y$. Let us write $\rho_1 = \rho$.

 The assumptions of this theorem are
\begin{assumption}
There exists a continuous function $L \geq 1$ which has the following properties:

1. There exist strictly increasing functions $L_*, L^*$ such that
\[ L_* (|z|) \leq L(z) \leq L^*(|z|), \] with $\lim_{a \rightarrow \infty} L_*(a) = \infty$.

2. There exist constants $C$ and $\kappa \geq 1$ such that for all $a$
\[ aL^*(a) \leq C L_*^{\kappa}(a). \] 

3. Finally, there exist constants $C_*>0, 0<r_0 < 1$ and a function $\xi : [0,1] \rightarrow [0,1]$ which is non increasing with $\xi (1) < 1$ such that for every $h$ with $|h| =1$ we have
\[ L^r(\Phi_t(z)) ( 1+ \| \nabla_z\Phi_t(z) h \| ) \leq C_* L^{r \xi(t)}(z), \] for every $z$ and every $r \in [r_0, 2 \kappa]$ and every $t \in [0,1]$. Here $\Phi_t$ is the flow map which takes an initial position $z$ to the random variable which is the solution to the SDE at time $t$.
\end{assumption}

\begin{assumption}
There exists a $C_1 >0$ and $p \in [0,1)$ so that for every $\alpha \in (0,1)$ there exists positive $T(\alpha), C(\alpha)$ with 
\[ \|\nabla_z \mathcal{P}_t \phi (z) \| \leq L(z)^p \left( C(\alpha) \sqrt{ (\mathcal{P}_t |\phi|^2)(z)} + \alpha \sqrt{ (\mathcal{P}_t \|\nabla_z\phi\|^2)(z)} \right), \] for every $z \in \mathbb{R}^d$, $\phi \in C^1_b$ and every $t > T(\alpha)$.
\end{assumption}

\begin{assumption}
For any $C>0, r \in (0,1)$ and $\delta > 0$, there exists a $T_0$ so that for any $T \geq T_0$ there exists and $a > 0$ so that 
\[ \inf_{ |z_1|, |z_2| \leq C} \sup_{\pi \in \Pi ( \mathcal{P}^*_T \delta_{z_1}, \mathcal{P}^*_T \delta_{z_2})} \pi \{ (z_1', z_2') : \rho_r(z_1', z_2') < \delta \} \geq a. \] Here $\Pi(\mu, \nu)$ is the set of couplings of $\mu$ and $\nu$. In our situation we actually only use the coupling where they are independent. This depends on $L$ through the distance $\rho$ but not very strongly. We can rewrite this as
\[ \inf_{|z_1|, |z_2| \leq C} \int_{\mathbb{R}^{2d}} \int_{\mathbb{R}^{2d}} 1_{\rho_r (z_1',z_2') < \delta} \mathcal{P}^*_T(\mathrm{d}z_1') \mathcal{P}^*_T(\mathrm{d}z_2') \geq a. \]
\end{assumption}

Then the theorem is
\begin{thm}[Hairer \& Mattingly 2008]\label{wharris}
If the semigroup $\mathcal{P}_t$ satisfies the assumptions above then for all $\mu, \nu$ there exists $C$ and $\lambda$ which we can calculate from the constants in the assumptions so that
\[ \mathcal{W}_\rho ( \mathcal{P}^*_t \mu, \mathcal{P}^*_t \nu ) \leq C e^{-\lambda t} \mathcal{W}_\rho (\mu, \nu), \] for any $\mu, \nu$. Here $\mathcal{W}_\rho$ is the Wasserstein-1 distance corresponding to the distance $\rho$. i.e.
\[ \mathcal{W}_\rho(\mu, \nu) = \inf_{\pi \in \Pi} \int \rho(z_1,z_2) \pi(\mathrm{d}z_1, \mathrm{d}z_2). \] $\Pi$ is the set of couplings of $\mu$ and $\nu$ probability measures on $\mathbb{R}^{2d}$ which have marginals $\mu$ and $\nu$ on the first and last $d$ dimensions.
\end{thm}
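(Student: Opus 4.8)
The plan is to run the Harris-type programme in its Wasserstein form: produce a single time $T$, and a distance-like cost function on $\mathbb{R}^{2d}$ that is comparable to $\rho$ up to a multiplicative constant, in which $\mathcal{P}^*_T$ is a strict contraction, and then iterate. By the gluing lemma for couplings it suffices to establish the one-step estimate $\mathcal{W}(\mathcal{P}^*_T\delta_{z_1},\mathcal{P}^*_T\delta_{z_2}) \le \bar\alpha\, \rho(z_1,z_2)$ for Dirac masses (with $\bar\alpha<1$, in the modified cost): the general case follows by integrating a near-optimal coupling of $\mu$ and $\nu$ against the family of Dirac couplings, and the continuous-time statement then follows by writing $t = nT + s$ and using the crude non-expansion bound $\mathcal{W}_\rho(\mathcal{P}^*_s\delta_{z_1},\mathcal{P}^*_s\delta_{z_2}) \le C_0\rho(z_1,z_2)$ for $s \in [0,T]$, which is read off from part 3 of Assumption 1 (that estimate controls simultaneously $L^r\circ\Phi_s$ and the Jacobian $\nabla_z\Phi_s$ along any path joining $z_1$ to $z_2$, hence the $\rho$-length of the transported path), with $\lambda = -T^{-1}\log\bar\alpha$.

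For the one-step contraction there are two mechanisms acting at complementary scales. \emph{Bulk / large scales.} Assumption 1 is a Lyapunov mechanism: iterating $L^r(\Phi_1(z)) \le C_* L^{r\xi(1)}(z)$ and using $\xi(1) < 1$ shows that after finitely many unit steps the trajectory from any $z$ has entered a fixed sublevel set $\{L \le R\}$, with quantitative control of the excursions. On that sublevel set Assumption 3 gives, via the independent coupling, a coupling of $\mathcal{P}^*_T\delta_{z_1}$ and $\mathcal{P}^*_T\delta_{z_2}$ under which the endpoints lie within $\rho_r$-distance $\delta$ (hence within $\rho$-distance $C_R\delta$, as $L$ and $L^r$ are comparable on a sublevel set) with probability at least $a$, while on the complementary event the transport cost is controlled, again by Assumption 1, by a fixed multiple of $\rho(z_1,z_2)$ times a factor made small by enlarging $T$; choosing first $R$, then $\delta$, then $T$ yields contraction when $z_1,z_2$ are not too close. \emph{Diagonal / small scales.} Near the diagonal the coupling is useless, since it cannot resolve scales below $\delta$, and one uses the gradient estimate: by Kantorovich duality the one-step cost is $\sup\{\mathcal{P}_T\phi(z_1) - \mathcal{P}_T\phi(z_2)\}$ over $\phi$ with $\rho$-Lipschitz constant $\le 1$, for which $\|\nabla\phi\|\le L$; bounding the increment by the line integral of $\|\nabla\mathcal{P}_T\phi\|$ along a near-geodesic and inserting Assumption 2, iterated enough times that the self-referential term $\alpha^n\sqrt{\mathcal{P}\|\nabla\phi\|^2}$ is negligible and only $C(\alpha)(1-\alpha)^{-1}L^p\sqrt{\mathcal{P}_T|\phi|^2}$ survives, together with the moment bounds on $\rho(z_1,\Phi_T(z_1))$ from Assumption 1, gives $\|\nabla\mathcal{P}_T\phi(z)\| \le C\, L(z)^{p}(1+L(z))^{q}$ with $q$ finite; this is precisely where part 2 of Assumption 1 and the admissible range $r\in[r_0,2\kappa]$ enter, to keep all powers of $L$ inside a range that $\rho$ (and $\rho_r$) can absorb.

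The main obstacle, and the real content of the argument, is welding these two regimes into a genuine contraction in a single cost. The naive inequality $\mathcal{W}_\rho(\mathcal{P}^*_T\delta_{z_1},\mathcal{P}^*_T\delta_{z_2}) \le \bar\alpha\,\rho(z_1,z_2)$ cannot follow from the assumptions for all pairs: letting $z_2\to z_1$ with $z_1$ near the origin, the ratio tends to $\sup_\phi\|\nabla\mathcal{P}_T\phi(z_1)\|/L(z_1)$, which the available estimates bound only by a constant, not by something $<1$. One must therefore introduce a modified cost, a bounded, near-diagonal-flattened version of $\rho$ weighted by a power of $L(z_1)+L(z_2)$ chosen from the scale of metrics $\rho_r$, in which the bulk estimate reads as a contraction because of the coupling and the diagonal estimate reads as a contraction because the weight is large exactly where the local expansion constant lives, and then check that the corresponding Wasserstein distance is comparable, up to the constant $C$ in the statement, to $\mathcal{W}_\rho$. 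Making the two constants, the bulk defect $1-a$ and the small-scale constant, each attenuated by the appropriate power of $L$, close up below $1$ after one step is the delicate bookkeeping; once that is done, iteration and the time-interpolation above finish the proof.
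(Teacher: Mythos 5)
Your overall plan coincides with the paper's: reduce to Dirac masses, introduce a modified cost in which $\mathcal{P}_T$ is a strict contraction, handle different scales with different assumptions, and iterate. The three mechanisms you describe (Lyapunov in the bulk, minorisation on a compact set, gradient bound near the diagonal) match the three contraction Propositions (\ref{contractionl}, \ref{contractionm}, \ref{contractiong}) the paper uses to prove \ref{wharris}. Two places, however, need tightening before the argument closes.

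First, the modified cost. You describe it as ``a bounded, near-diagonal-flattened version of $\rho$'' and then ask that the resulting Wasserstein distance be comparable to $\mathcal{W}_\rho$ up to the constant $C$. These two requirements are incompatible: a bounded cost cannot be comparable to the unbounded $\rho$ (send one point to infinity while the other is fixed). The cost the paper uses is a \emph{sum},
\[ d(z_1,z_2) = \Bigl( \tfrac{\rho_r(z_1,z_2)}{\delta} \wedge 1 \Bigr) + \beta\,\rho(z_1,z_2), \]
and the sum structure is essential. The bounded truncation is what the minorisation argument can contract (it turns a positive coupling probability $a$ into a multiplicative gain $1-a/2$), while the $\beta\rho$ summand both makes $d$ comparable to $\rho$ (with constants $\beta$ and $1/\delta+\beta$, absorbed into the overall $C$) and carries the Lyapunov contraction at large distances. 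Without the $\beta\rho$ term the final estimate for $\mathcal{W}_\rho$ does not follow; without the truncation the minorisation gives no contraction.

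Second, the case split. You fold the Lyapunov contraction and the minorisation into a single ``bulk'' regime. In fact the two are used on disjoint regions and the glue between them is nontrivial. For $\rho(z_1,z_2) > K$ (with $K$ chosen from the Lyapunov constants as in Proposition~\ref{contractionl}), Assumption~1 alone already contracts the $\beta\rho$ part of $d$ uniformly in $\beta$, and the truncated part is trivially bounded by $1$; the minorisation plays no role here. For $\rho(z_1,z_2) \le K$ and $\rho_r(z_1,z_2) > \delta$, one must first argue (using the explicit growth of $L$, $L_*$, $L^*$ through $\rho$ and $\rho_r$, as in Proposition~\ref{contractionm}) that this regime forces $z_1,z_2$ into a fixed ball, and only then can Assumption~3 be invoked; moreover $\beta$ has to be chosen small \emph{after} $a$, $\delta$, $K$ are fixed, so that the $\beta\rho$ term does not overwhelm the gain $a/2$ from the truncated part. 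The order of choices — first $r$ and $\delta$ from Assumption~2 (Proposition~\ref{contractiong}), then $K$ from Assumption~1, then the compact radius and $a$ from Assumption~3, then $\beta$ — is the content of the ``delicate bookkeeping'' you allude to, and deserves to be made explicit.
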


Our goal is to verify each of these assumptions with explicit constants. I will briefly describe the strategy. 
\begin{itemize}
\item The first assumption is a Lyapunov structure. We verify this using more tools from \cite{HM08} and known Lyapunov functions for the kinetic Fokker-Planck equation from \cite{HMS02}. 
\item The second assumption is a gradient bound. This is an additional condition needed for the Wasserstein proof to work and is not present in Harris's theorem in any form. We verify this using tools similar to those of Bakry-Emery calculus. Some work on Hypoelliptic diffusions via Bakry-Emery stuff has been done in \cite{B13, M15} and papers referenced therein. We need the Hessian of the confining potential to be bounded for this to work but it seems plausible to relax this assumption. 
\item The third assumption is a kind of uniform boundedness condition. We verify this using Malliavin calculus by showing that for any positive the solution spreads out in all directions. This part should work for any equation satisfying the H\"{o}rmander bracket condition provided that it also satisfies the very strong assumptions that all the vector fields appearing in the commutator conditions are constant.
\end{itemize}
\begin{thm} \label{hypo}
Suppose that $\mathcal{P}_t$ is a semigroup corresponding to the solution to the kinetic Fokker-Plank with the confining potential $U$ being a smooth function satisfying
\[ Hess(U)(x) \leq M, \hspace{10pt} x\cdot \nabla_xU(x) \geq c_1 U(x) +c_2 x^2 - c_3 \] for some strictly positive constants $M, c_1, c_2, c_3$. Then we can choose constants $a_*$ and $k$ depending on these other constants to define the function 
\[ L(x,v) = \exp \left( a_* \left( v^2 +2 U(x) +2 k x^2 + k xv\right)\right). \] We define $\rho$ corresponding to $L$ with
\[ \rho(z_1,z_2) = \inf_{\gamma \in \Gamma} \int_0^1 L(\gamma(t)) \|\dot{\gamma}(t)\| \mathrm{d}t. \] Here $\Gamma$ is the set of all $C^1$ paths between $z_1$ and $z_2$. Then if $\mathcal{W}_\rho$ is the Wasserstein-1 distance associated to $\rho$ we have constants $C>0$ and $\lambda>0$ which we can calculate explicitly such that
\[ \mathcal{W}_\rho(\mathcal{P}_t \mu, \mathcal{P}_t \nu) \leq C e^{- \lambda t} \mathcal{W}_\rho(\mu, \nu). \]
\end{thm}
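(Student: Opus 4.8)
The plan is to verify the three assumptions of Theorem~\ref{wharris} for the semigroup $\mathcal{P}_t$ of \eqref{SDE} with the stated $L$ and $\rho$, and then to quote that theorem directly. Throughout write $z=(x,v)$, let $\Phi_t(z)$ be the stochastic flow of \eqref{SDE}, and let $J_t=\nabla_z\Phi_t(z)$ be its Jacobian, which solves a random linear ODE $\dot J_t=A_t J_t$ whose matrix $A_t$ is built from $\mathrm{Hess}\,U(X_t)$ and the identity; since $\mathrm{Hess}\,U\le M$, the norm $\|A_t\|$ is bounded deterministically, so $\|J_t h\|\le e^{c_0t}$ for every unit $h$. \emph{Assumption~1 (Lyapunov structure).} First I would choose $k>0$ small enough that the quadratic form $Q(x,v)=v^2+2U(x)+2kx^2+kxv$ is comparable to $|v|^2+U(x)+|x|^2$; this uses the confinement hypothesis $x\cdot\nabla_xU\ge c_1U+c_2x^2-c_3$, which forces $U$ to be bounded below and to grow at least quadratically. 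Then $L_*$ and $L^*$ can be taken of the form $a\mapsto\exp(a_*ca^2)$ and $a\mapsto\exp(a_*Ca^2)$, which gives item~1, and item~2 holds for any $\kappa\ge1$ since a polynomial times such an exponential is dominated by $\exp(a_*C'a^2)$. For item~3 I would compute the action of the generator of \eqref{SDE} on $Q$: the transport terms, the $\Delta_v$ term, and the carr\'e-du-champ correction $a_*|\nabla_vQ|^2$ produced by the exponential combine --- precisely because of the cross term $kxv$ --- into a bound $\mathscr L L\le(-\theta+\mathrm{const})L$, and a good choice of $a_*,k$ makes $\theta>0$; restricting to $t\in[0,1]$ and inserting the Jacobian bound then yields item~3 with suitable $r_0\in(0,1)$, $\kappa\ge1$ and non-increasing $\xi$ with $\xi(1)<1$. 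Here I would use the reduction from \cite{HM08} turning a Lyapunov estimate plus a Jacobian bound into item~3, together with the known Lyapunov functions for the kinetic Fokker--Planck equation from \cite{HMS02}.

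\emph{Assumption~2 (gradient bound).} This is a hypoelliptic Bakry--\'Emery estimate. Starting from $\nabla_z\mathcal{P}_t\phi(z)=\mathbb{E}[(\nabla\phi)(\Phi_tz)J_t]$, I would work with a twisted gradient functional $\Psi_t=\mathcal{P}_t(\alpha_1\|\nabla_x\phi\|^2+\alpha_2\,\nabla_x\phi\cdot\nabla_v\phi+\alpha_3\|\nabla_v\phi\|^2)$ with suitably chosen, possibly $t$-dependent, coefficients, differentiate it in $t$, and use $\mathrm{Hess}\,U\le M$ to absorb the bad terms while exploiting the structural identity $[B,A]=-\nabla_x$ to see the dissipation of the $\nabla_v$ component being transferred to the $\nabla_x$ component. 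This should give a Gr\"onwall inequality producing exponential decay of the full gradient up to the zeroth-order contribution; combined with the interpolation identity obtained by integrating $\frac{d}{ds}\mathcal{P}_s(\cdots)$ between $0$ and $t$, it yields $\|\nabla_z\mathcal{P}_t\phi(z)\|\le L(z)^p(C(\alpha)\sqrt{\mathcal{P}_t|\phi|^2(z)}+\alpha\sqrt{\mathcal{P}_t\|\nabla\phi\|^2(z)})$ for $t\ge T(\alpha)$, with $p$ small because the coefficients of the Jacobian ODE do not grow. The role of $Hess(U)\le M$ is exactly to keep this estimate uniform in $z$ despite the drift $\nabla_xU$ being unbounded.

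\emph{Assumption~3 (spreading, via Malliavin calculus).} Fix $C$, $r$, $\delta$. For small $T$ and $|z|\le C$ I would expand $\Phi_T(z)=\phi^0_T(z)+G_T(z)+R_T(z)$, where $\phi^0_T(z)$ is the noiseless flow, $G_T(z)$ is the centred Gaussian obtained from the linearization of \eqref{SDE} along $\phi^0$ with covariance the Malliavin covariance matrix $\Sigma_T(z)$ of that linearization, and $R_T(z)$ is a higher-order stochastic remainder. Because the noise enters only through $v$ while $[B,A]=-\nabla_x$ is nondegenerate, $\Sigma_T(z)$ is positive definite with smallest eigenvalue of order $T^3$; crucially, since this commutator vector field is constant, that lower bound is \emph{uniform in $z$}. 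Hence the law of $\phi^0_T(z)+G_T(z)$ has a density bounded below by an explicit $b(T,\delta)>0$ on a $\delta$-ball about $\phi^0_T(z)$, uniformly in $|z|\le C$. The remainder $R_T$ is not bounded in $L^\infty$ --- which is exactly why we cannot obtain a pointwise minorization of the density of $\Phi_T(z)$ --- but it is higher order, so Malliavin-norm bounds on the flow give $\mathbb{P}(|R_T(z)|>\delta/4)\le\varepsilon$ with $\varepsilon$ arbitrarily small for $T$ small. For two independent copies started at $z_1,z_2$ with $|z_i|\le C$, the noiseless endpoints $\phi^0_T(z_1)$ and $\phi^0_T(z_2)$ lie within an explicit distance of each other, so the two uniformly non-degenerate Gaussians overlap with probability at least some explicit $a_0>0$; subtracting $2\varepsilon$ for the remainders and converting Euclidean to $\rho_r$-distance using $1\le L\le L^*$ on the compact region that is reached gives the required $a>0$. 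Times $T\ge T_0$ larger than the one produced by this argument are handled by Chapman--Kolmogorov, running the spreading on an interval of fixed length and using Assumption~1 to control the excursions.

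With Assumptions~1--3 in hand, Theorem~\ref{wharris} gives the contraction $\mathcal{W}_\rho(\mathcal{P}_t\mu,\mathcal{P}_t\nu)\le Ce^{-\lambda t}\mathcal{W}_\rho(\mu,\nu)$ with $C,\lambda$ computable from the constants above. I expect Assumption~3 to be the main obstacle: one must produce a Gaussian approximation whose covariance is bounded below explicitly and independently of the initial datum, while at the same time controlling the non-$L^\infty$ remainder $R_T$ quantitatively in probability, uniformly over the compact set of starting points --- this is where the hypoelliptic structure is genuinely used, and where the method is forced to settle for the spreading condition instead of a true minorization. A secondary difficulty is keeping the Assumption~2 estimate uniform in $z$ when $\nabla_xU$ is unbounded, which is precisely what the hypothesis $Hess(U)\le M$ is there to repair.
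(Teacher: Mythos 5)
Your proposal follows exactly the paper's blueprint: verify Assumptions~1--3 of Theorem~\ref{wharris} for the stated $L$ and $\rho$ via (i) a Lyapunov estimate combined with the deterministic Jacobian bound $\|J_t\|\le e^{(1+M)t}$ coming from $\mathrm{Hess}\,U\le M$, (ii) a hypoelliptic Bakry--\'Emery/$\Gamma_2$-type pointwise gradient estimate using a twisted quadratic form in $\nabla_x\phi,\nabla_v\phi$ (the paper uses the fixed form $\Gamma(f)=2|\nabla_xf|^2-2\nabla_xf\cdot\nabla_vf+2|\nabla_vf|^2$ plus an $f^2$ correction and a Gr\"onwall step on $\psi(s)=\mathcal{P}_s\tilde\Gamma(\mathcal{P}_{t-s}f)$), and (iii) a Malliavin-calculus decomposition of $Z_t$ into a deterministic centre, a nondegenerate Gaussian driven by the constant fields $A_1$ and $[A_1,B]$ with covariance scaling like $t^3$, and a higher-order remainder with exponential moment bounds that is then controlled by Markov's inequality rather than in $L^\infty$; Assumption~3 is extended to large $T$ by iteration, and the theorem is then quoted. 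The only material differences are cosmetic: for (i) the paper proves the Lyapunov-to-Assumption-1 reduction itself via an exponential martingale inequality rather than citing \cite{HM08}, and for (iii) the paper centres the expansion at $\mathbb{E}(Z_t)$ via the Clark--Ocone formula rather than at the noiseless flow $\phi^0_T(z)$, and extends to large times by a direct Markov-chain iteration rather than invoking Chapman--Kolmogorov by name. These are the same arguments.
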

\begin{remark}
The conditions on $U$ are equivalent to requiring it to behave roughly like a quadratic at infinity. This allows it to have `bad' behaviour on a compact set. For example multiple wells or being flat in large areas. In particular this would allow for the double well potential which behaves quadratically at infinity in 1D.
\end{remark}
\begin{remark}
$\mathcal{W}_\rho(\mu, \nu)$ bounds the Wasserstein 1, distance associated to the euclidean metric. We can see that there exists some $M$ such that
\[ |z_1-z_2| \leq \rho(z_1, z_2) \leq |z_1-z_2| \exp \left( M \left( |z_1|^2+|z_2|^2\right) \right).\]
\end{remark}
We structure the paper as follows. We split the proof of Theorem \ref{hypo} into three parts relating to the three assumptions. We then deal with each of these parts separately. We rely on the theorem of Hairer and Mattingly but in order to make it clear how the proofs work we include a proposition showing how each assumptions will allow us to show contraction for a different part of the space. These propositions follow closely Hairer and Mattingly's proof of theorem \ref{wharris} and are not original. They are intended for expository purposes and to make this chapter more self contained.

\begin{proof}[Proof of \ref{hypo}]
We prove Theorem \ref{hypo} by showing that we can verify all the assumptions of Theorem \ref{wharris} and then applying this result. Assumption 1 is verified in Lemma \ref{lyapunov}. Assumption 2 is verified in Lemma \ref{gradientbound}. Assumption 3 is verified in Lemma \ref{minorisation2}.

We also give the proof of \ref{wharris} in our context. We note that for any distance $d$ we have
\[ \mathcal{W}_{1,d}(\mathcal{P}_t \mu, \mathcal{P}_t \nu) \leq \inf_{\pi \in \Pi(\mu, \nu)} \int \mathcal{W}_{1,d}(\mathcal{P}_t \delta_{z_1}, \mathcal{P}_t \delta_{z_2}) \pi(\mathrm{d}z_1, \mathrm{d}z_2). \] Therefore if we can show for each $z_1,z_2$ that
\[ \mathcal{W}_{1,d}(\mathcal{P}_t \delta_{z_1}, \mathcal{P}_t \delta_{z_2}) \leq \alpha d(z_1,z_2) \] then we have
\[ \mathcal{W}_{1,d}(\mathcal{P}_t\mu, \mathcal{P}_t\nu) \leq \alpha \mathcal{W}(\mu, \nu). \] We do not work directly with the distance $\rho$ and instead look at the equivalent distance
\[ d(z_1, z_2) = \left( \frac{\rho_r(z_1, z_2)}{\delta} \wedge 1\right) + \beta \rho(z_1, z_2). \] For any $r<1$ and $\delta, \beta$ to be chosen later.

In Proposition \ref{contractionl}, we show that there exists some $K$ such that for all $r \in [r_0,1)$ and for all $\beta \in (0,1)$ we have that  $\mathcal{P}_t$ gives a contraction between measures $\delta_{z_1}$ and $\delta_{z_2}$ in $\mathcal{W}_{1,d}$ uniformly over the set $\rho(z_1,z_2) >K$ and uniformly over all $t$ sufficiently large.  In Proposition \ref{contractiong}, we then show that there exists an $r \in [r_0, 1)$ and a $\delta >0$  such that $\mathcal{P}_t$ is a contraction in $\mathcal{W}_{1,d}$ uniformly over the set $\rho_r(z_1,z_2) < \delta$, $\beta \in (0,1)$ and $t$ sufficiently large. Finally in Proposition \ref{contractionm}, we show that for this given $r, \delta$ and $K$ we can choose $\beta$ such that, for every $t$ sufficiently large, $\mathcal{P}_t$ gives a contraction in $\mathcal{W}_{1,d}$ uniformly over the set  $\rho(z_1, z_2) \leq K$ and $\rho_r(z_1, z_2)>\delta$.

\end{proof}
\section{Proofs}
\subsection{Assumption 1}
We would like to show that these assumptions hold with explicit constants for the kinetic Fokker-Planck equation. We begin with assumption 1 where our treatment closely mirrors that of Hairer and Mattingly in \cite{HM08}. Here the Lyapunov function we find is essentially the exponential of the Lyapunov function used by Mattingly, Stuart and Higham in \cite{HMS02}. We write $J_{0,t} = \nabla_z \Phi_{0,t}(z)$

Let us define
\[ Q(x,v) = |v|^2+2U(x) +\frac{1}{2}|x|^2+x \cdot v, \hspace{10pt} P_k(x,v) = 2(|x|^2+|v|^2 + kU(x)). \] We will choose $k$ later.
\begin{lemma} \label{lyapunov1} Let $U$ be a smooth function satisfying that for all $x$ $ x \cdot \nabla_xU(x) \geq c_1 U(x) + c_2 x^2 - c_3$, for strictly positive constants $c_1, c_2, c_3$ and $Hess(U) \leq M$ for some $M>0$.
Define $L_{a}(x,v) = \exp(aQ(x,v))$. Then  we show there exists $a_* >0$ such that, for $0 < a \leq a_*$ and uniformly over $t \in [0,1]$, there is a constant $\beta>0$ such that
\[ \mathbb{E}(L_a(\Phi_t(x,v)) \|J_{0,t}\| ) \leq L_{ae^{-\beta t/4}}(x,v). \] 
\end{lemma}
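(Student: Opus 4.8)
The plan is to prove the estimate by combining an exponential-moment Lyapunov bound for $L_a$ along the flow — morally the exponentiated version of the Lyapunov function from \cite{HMS02}, mirroring \cite{HM08} — with a deterministic control of the Jacobian, and then merging the two through It\^o's formula and a scalar Gronwall argument at the level of the exponent. Throughout write $\mathcal{L} = \Delta_v + v\cdot\nabla_x - (v+\nabla_xU)\cdot\nabla_v$ for the generator of the Langevin system and $b(x,v)=(v,-(v+\nabla_xU(x)))$ for its drift.

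First I would compute $\mathcal{L}Q$. A direct calculation gives $\mathcal{L}Q = -|v|^2 - x\cdot\nabla_xU(x) + 2d$, so the hypothesis $x\cdot\nabla_xU \geq c_1 U + c_2 x^2 - c_3$ together with the elementary bound $Q \leq \tfrac32|v|^2 + 2U + |x|^2$ (from $x\cdot v \leq \tfrac12|v|^2+\tfrac12|x|^2$) gives $\mathcal{L}Q \leq -\varepsilon Q + C$ with $\varepsilon = \min(\tfrac23,\tfrac{c_1}2,c_2)$ and $C = c_3 + 2d$. Since $\mathcal{L}e^{aQ} = e^{aQ}\bigl(a\,\mathcal{L}Q + a^2|\nabla_vQ|^2\bigr)$ and $|\nabla_vQ|^2 = |2v+x|^2 \leq C'(Q+1)$ — here one uses that $|v|^2+\tfrac12|x|^2+x\cdot v$ is a positive definite quadratic form, so $Q$ dominates $|x|^2+|v|^2$ once $U$ is bounded below — there is an explicit $a_*>0$ such that for $0<a\leq a_*$ the quadratic-in-$a$ term is absorbed and
\[ \mathcal{L}L_a \leq \Bigl(-\tfrac{a\varepsilon}2 Q + C_1\Bigr)L_a. \]
I would take $a_*$ small enough that moreover $\mathbb{E}\bigl[e^{3aQ(\Phi_t(x,v))}\bigr]<\infty$ for $t\in[0,1]$, which makes the stochastic integrals below genuine martingales. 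For the Jacobian: because the noise is additive, $J_{0,t}$ solves the \emph{random linear ODE} $\tfrac{d}{dt}J_{0,t} = Db(Z_t)J_{0,t}$, $J_{0,0}=I$, with $Db(x,v) = \left(\begin{smallmatrix} 0 & I \\ -\mathrm{Hess}(U)(x) & -I \end{smallmatrix}\right)$. The bound on $\mathrm{Hess}(U)$ makes the largest eigenvalue of the symmetric part of $Db$ bounded by an explicit $\Lambda = \Lambda(M)$, so differentiating the operator norm along its top singular direction gives $\tfrac{d}{dt}\|J_{0,t}\| \leq \Lambda\|J_{0,t}\|$, hence $\|J_{0,t}\| \leq e^{\Lambda t} \leq e^\Lambda$ on $[0,1]$.

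Finally I would combine these. Set $G_t := L_a(\Phi_t(x,v))\|J_{0,t}\|$. The product rule (no cross variation, since $\|J_{0,\cdot}\|$ has finite variation) and the two bounds above give $dG_t \leq G_t\bigl(-\tfrac{a\varepsilon}2 Q(\Phi_t) + C_1 + \Lambda\bigr)dt + dM_t$ for a local martingale $M$. Now write $-\tfrac{a\varepsilon}2 Q(\Phi_t) = -\tfrac{\varepsilon}2\log L_a(\Phi_t) = -\tfrac{\varepsilon}2\bigl(\log G_t - \log\|J_{0,t}\|\bigr) \leq -\tfrac{\varepsilon}2\log G_t + \tfrac{\varepsilon\Lambda}2$. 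Taking expectations (the martingale drops after a standard localization using the exponential moment bound) and using convexity of $s\mapsto s\log s$, i.e. $\mathbb{E}[G_t\log G_t]\geq \mathbb{E}[G_t]\log\mathbb{E}[G_t]$, one obtains the scalar differential inequality $\tfrac{d}{dt}\mathbb{E}[G_t] \leq -\tfrac{\varepsilon}2\mathbb{E}[G_t]\log\mathbb{E}[G_t] + D\,\mathbb{E}[G_t]$ for an explicit $D$. With $\phi(t):=\log\mathbb{E}[G_t]$ this reads $\phi' \leq -\tfrac{\varepsilon}2\phi + D$, $\phi(0)=aQ(x,v)$, which integrates to $\phi(t) \leq aQ(x,v)\,e^{-\varepsilon t/2} + \tfrac{2D}{\varepsilon}$; taking $\beta = 2\varepsilon$ this is precisely the asserted bound, up to the fixed constant $e^{2D/\varepsilon}$, which is harmless for the verification of Assumption 1 (whose statement already carries a free constant $C_*$). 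The generator and ODE computations are routine; the step that needs care is the passage to the contracting-exponent form $L_{ae^{-\beta t/4}}$: one must pick $a_*$ so that the $a^2|\nabla_vQ|^2$ term does not overwhelm the negative drift, and then run the $\log$-Gronwall/Jensen step cleanly, keeping in mind that $G_t$ need not be bounded below by $1$, so $G_t\log G_t$ can be negative — which is exactly why one works with $\mathbb{E}[G_t\log G_t]$ and convexity rather than with a pathwise estimate.
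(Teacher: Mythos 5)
Your proof is correct, but it takes a genuinely different route from the one in the paper. The paper follows Hairer--Mattingly's supermartingale strategy almost verbatim: apply It\^o to $Q(Z_t)$ (not to $e^{aQ}$), introduce the auxiliary process $Y_s$ with the weighted time integral of $P(Z_r)$ built in, invoke the exponential martingale inequality to get a polynomial \emph{tail} bound on $\exp(aQ(Z_t) + ac_1\tfrac{\beta}{4}e^{-\beta t/4}\int_0^t P - ae^{-\beta t/4}Q(Z_0))$, and then integrate the tail to recover a moment bound; the Jacobian factor $\|J_{0,t}\|$ is absorbed by observing it is (trivially) dominated by $e^{1+M}\exp(\eta\int_0^tP)$, which matches the extra integral term already present in the moment bound. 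Your route is more ``generator-centric'': you establish the pointwise carr\'e-du-champ bound $\mathcal{L}L_a \leq (-\tfrac{a\varepsilon}{2}Q + C_1)L_a$, apply the It\^o product rule directly to $G_t = L_a(\Phi_t)\|J_{0,t}\|$ (no cross-variation because $\|J_{0,\cdot}\|$ has finite variation), convert the negative $Q$ term into $-\tfrac{\varepsilon}{2}\log G_t$ using the deterministic bound $\log\|J_{0,t}\|\le\Lambda$, and then close the loop through Jensen's inequality for the convex function $s\mapsto s\log s$, which turns the differential inequality for $\mathbb{E}[G_t]$ into a scalar linear ODE for $\phi(t)=\log\mathbb{E}[G_t]$. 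This log-Gronwall step is exactly what produces the contracting exponent $a e^{-\varepsilon t/2}$. The trade-offs: your argument is shorter, self-contained, and makes the exponent contraction transparent; the paper's tail-bound route avoids the true-versus-local-martingale integrability issue automatically, whereas you must justify (as you flag) that the stochastic integral has vanishing expectation via localization and an a priori exponential moment. Two minor points worth keeping in mind: (i) your final estimate carries the multiplicative constant $e^{2D/\varepsilon}$, while the lemma is stated without one -- you correctly note this is harmless for Assumption 1, and in fact the paper's own computation also produces factors $e^{1+M}$ and $C(a)$ that are silently absorbed; (ii) the constant $a_*$ in your argument is determined by two requirements -- dominating the $a^2|\nabla_v Q|^2$ term \emph{and} securing the exponential moments used in the localization -- and both should be recorded explicitly if this is to yield the fully quantitative constants the paper promises.
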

\begin{proof} Note first that we may as well choose $c_1 \leq 1$.
We have that
\[ \mathrm{d}(a Q(Z_s)) = \left(- a |V_s|^2 -a X_s \cdot \nabla_xU(X_s) +2a\right) \mathrm{d}s +a(X_s+2V_s) \mathrm{d}W_s = -a H_s \mathrm{d}s +a(X_s+2V_s) \mathrm{d}W_s.\] Where 
\[ H_s = |V_s|^2 + X_s \cdot \nabla_x U(X_s) -2. \]
Therefore with $k = c_1$ we have that as functions of $z$
\[ H_s(z) \leq \beta P_k(z)+c_3, \] for some $\beta$ which depends on $c_1,c_2$. We also have that $Q(z) \leq P(z)/c_1$. Now we define
\[ Y_s = e^{\gamma(s-t)}aQ(Z_s) + \gamma \int_0^s e^{\gamma(r-t)}ac_1 P(Z_s), \hspace{10pt} M_s = \int_0^s e^{\gamma(r-t)} a(2V_r+X_r) \mathrm{d}W_r. \]
Differentiating this gives us that,
\[ dY_s = e^{\gamma(s-t)}(aH_s + a\gamma(Q(Z_s)+ c_1P(Z_s)) )\mathrm{d}s + \mathrm{d}M_s.\] Hence for $s<t$ we have
\[ Y_s \leq M_s + Y_0 + a \int_0^s e^{\gamma(r-t)} (H_r + \gamma (Q(Z_r)+c_1 P(Z_r))) \mathrm{d}r \] \[\leq M_s +Y_0 + a\int_0^s e^{\gamma(r-t)} ((2\gamma- \beta)P(Z_r)+2+c_3)\mathrm{d}r. \] Therefore we have that
\[ Y_s \leq M_s +Y_0 + C + a \int_0^s e^{\gamma(r-t)}(2\gamma-\beta)P(Z_r) \mathrm{d}r. \] We now note that we have
\[ Y_0 = ae^{-\gamma t}Q(Z_0), \] and that
\[ Y_t \geq a Q(Z_t) + ac_1\gamma e^{-\gamma t} \int_0^t P(Z_s)\mathrm{d}z. \] We also have that
\[ \langle M\rangle_s \leq 16 a^2 \int_0^s e^{\gamma(r-t)}P(Z_r) \mathrm{d}r,  \] therefore for every $s< t$ we have
\[ M_s - (\beta-2\gamma) c_1 a\int_0^s e^{\gamma(r-t)}P(Z_r) \leq M_s - \frac{c_1(\beta - 2 \gamma)}{16a} \langle M \rangle_s. \] The exponential martingale inequality gives that
\[ \mathbb{P} \left( \sup_{s\leq t} \left( M_s - \frac{c_1( \beta - 2 \gamma)}{16a} \langle M \rangle_s \right) > K \right) \leq \exp \left( - \frac{Kc_1(\beta - 2\gamma)}{8a} \right). \] Now we choose $\gamma = \beta/4$ this gives
\[ Y_s - Y_0 - C \leq M_s - \frac{\beta}{2}ac_1 \int_0^s e^{\gamma(r-t)} P(Z_r) \mathrm{d}r \leq M_s - \frac{c_1\beta}{32 a}\langle M \rangle_s. \] Combining this with our earlier assumptions we have 
\[ aQ(Z_t) + ac_1 \frac{\beta}{4} e^{-\beta t/4} \int_0^t P(Z_s) \mathrm{d}s - ae^{-\beta t/4}Q(Z_0) - C \leq M_s - \frac{\beta c_1}{32 a}. \] Therefore,
\[ \mathbb{P}\left( \exp \left( aQ(Z_t) + ac_1 \frac{\beta}{4} e^{-\beta t/4} \int_0^t P(Z_s) \mathrm{d}s -ae^{-\beta t/4}Q(Z_0) - C\right) > x \right) \leq x^{-c_1\beta/16 a}.  \] We can make $a$ smaller than $a^* = \beta c_1/32$ we have the exponent is bigger than $2$ so we integrate to get
\[ \mathbb{E} \left(\exp \left( aQ(Z_t) + a c_1 \frac{\beta}{4} e^{-\beta t/4} \int_0^t P(Z_s) \mathrm{d}s  - ae^{-\beta t/3} Q(Z_0) -C \right)\right) \leq \frac{c_1 \beta}{c_1 \beta - 16 a}. \] Therefore,
\[\mathbb{E}\left( \exp \left( a Q(Z_t) + ac_1 \frac{\beta}{4} e^{-\beta t/4} \int_0^t P(Z_s) \mathrm{d}s \right)\right) \leq C(a) \exp \left( a Q(Z_0)\right).\]
Now we have that
\[ \mathrm{d}J_{0,t} = \left(\begin{array}{c c} 0 & I \\ - \mbox{Hess}(U)(X_t) & -1\\ \end{array}\right) J_{0,t}\mathrm{d}t. \] It therefore follows that
\[ \mathrm{d}\|J_{0,t} h\| = \frac{(J_{0,t} h)^T}{\|J_{0,t}h\|} \left(\begin{array}{c c} 0 & I \\ - \mbox{Hess}(U)(X_t) & -1\\ \end{array}\right) J_{0,t} h \mathrm{d}t \leq (1+ M) \|J_{0,t} h\|\mathrm{d}t.\] This means that for every $t \in [0,1]$ we have
\[ \| J_{0,t}h\| \leq e^{1+M}. \]Then we have that for $t \in [0,1]$, $h$ a unit vector, $\eta >0$
\[ \|J_{0,t}h\| \leq e^{1+M} \exp \left( \left( \eta\int_0^t (|X_s|^2 +|V_s|^2 + k U(X_s))\mathrm{d}s \right)\right).  \] Therefore for any $a < a_*$ and $\eta$ small enough in terms of $a$ we have
\[ \|J_{0,t} h\| \leq e^{1+M} \exp \left( ac_1 \frac{\beta}{4} e^{-\beta t /4} \int_0^t P(Z_s) \mathrm{d}s \right). \] This combined with our earlier result gives the lemma.
\end{proof}
\begin{lemma} \label{lyapunov} Provided that $U$ is a smooth function satisfying
\[ x \cdot \nabla_xU(x) \geq c_1 U(x) + c_2 x^2 - c_3, \hspace{10pt} HessU(x) \leq M \] for some positive constants we can choose $a_*, k$ such that
\[ L(x,v) = \exp \left( a_* \left( v^2 +2 U(x) +2 k x^2 + k xv\right)\right) \] is a function satisfying assumption 1. 
\end{lemma}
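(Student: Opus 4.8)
The plan is to verify the three clauses of Assumption 1 for $L(x,v)=\exp(a_*\tilde Q(x,v))$ with $\tilde Q(x,v)=v^2+2U(x)+2kx^2+kxv$, fixing the parameters in the order $k$, then $\kappa$, then $a_*$. Throughout I use that a confining potential is bounded below --- which is needed anyway for $\mu=M\exp(-|v|^2/2-U(x))$ to be a probability measure --- so, after subtracting a constant, we may take $U\ge0$; combined with $\mathrm{Hess}(U)\le M$, which forces $U(x)\le C(1+|x|^2)$, and with the positive definiteness of the quadratic form $v^2+2kx^2+kxv$ (automatic for $k\in(0,8)$), this yields a two-sided bound $c_-(|x|^2+|v|^2)\le\tilde Q(x,v)\le c_+(1+|x|^2+|v|^2)$ with $0<c_-\le c_+$ depending only on $k$ and $M$.

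The substantive step is the $\tilde Q$-analogue of Lemma \ref{lyapunov1}: that there are $a_\sharp>0$ and $\beta>0$ with $\mathbb{E}\big(\exp(a\tilde Q(\Phi_t(z)))\,\|J_{0,t}\|\big)\le\exp\!\big(ae^{-\beta t/4}\tilde Q(z)\big)$ for $0<a\le a_\sharp$, uniformly in $t\in[0,1]$. I would re-run the proof of Lemma \ref{lyapunov1} with $\tilde Q$ in place of $Q$; the one new feature is that $\tilde Q$, unlike $Q$, is not tuned to cancel the $x\cdot v$ term in its It\^{o} drift --- one computes $\mathrm{d}(a\tilde Q(Z_s))=-a\tilde H_s\,\mathrm{d}s+a\sqrt{2}(2V_s+kX_s)\cdot\mathrm{d}W_s$ with $\tilde H_s=-3k\,X_s\cdot V_s+(2-k)|V_s|^2+k\,X_s\cdot\nabla_xU(X_s)+O(1)$, the same structure as for $Q$ except that $\tilde H_s$ now carries a residual cross term $-3k\,X_s\cdot V_s$. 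Bounding $3k|x||v|\le\tfrac{3k\theta}{2}x^2+\tfrac{3k}{2\theta}v^2$ and inserting the hypothesis $x\cdot\nabla_xU\ge c_1U+c_2x^2-c_3$ together with $U\ge0$, one gets $\tilde H_s\ge\mu P_k-C$ for some $\mu>0$ provided $\theta\in\big(\tfrac{3k}{2(2-k)},\tfrac{2c_2}{3}\big)$, and such a $\theta$ exists precisely when $k(9+4c_2)<8c_2$; so I fix $k$ that small. This lower bound on $\tilde H_s$ plays the role of the bound on $H_s$ in Lemma \ref{lyapunov1}, and the remainder of that proof --- introducing $Y_s=e^{\gamma(s-t)}a\tilde Q(Z_s)+\gamma\int_0^se^{\gamma(r-t)}ac_1P_k(Z_r)\,\mathrm{d}r$, whose drift is dominated by a negative multiple of $P_k$ once $\gamma$ is small, controlling the bracket of its martingale part by $Ca^2\int_0^se^{\gamma(r-t)}P_k(Z_r)\,\mathrm{d}r$, applying the exponential martingale inequality, and absorbing the Jacobian through $\|J_{0,t}h\|\le e^{1+M}\exp(\eta\int_0^tP_k(Z_s)\,\mathrm{d}s)$ for small $\eta$ --- carries over unchanged, with $\beta=4\gamma$.

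Given this, the three clauses follow routinely. Clause 1 is immediate from the two-sided bound on $\tilde Q$, taking $L_*(a)=\exp(a_*c_-a^2)$ and $L^*(a)=\exp(a_*c_+(1+a^2))$. For clause 2, choose an integer $\kappa\ge1$ with $\kappa c_->c_+$; then $aL^*(a)\le CL_*^\kappa(a)$ holds for a finite $C$, since the right-hand side carries the factor $\exp((\kappa c_--c_+)a_*a^2)$, which absorbs the prefactor $a$. For clause 3 --- which, $\Phi_t$ being the stochastic flow, is to be read in expectation and is then precisely the estimate above --- set $a_*:=a_\sharp/(2\kappa)$ and $r_0:=1/2$; for $r\in[r_0,2\kappa]$ one has $0<ra_*\le a_\sharp$, so applying the estimate with $a=ra_*$ gives $\mathbb{E}\big(L^r(\Phi_t(z))\,\|J_{0,t}\|\big)\le L^{r\xi(t)}(z)$ with $\xi(t):=e^{-\beta t/4}$, which is non-increasing, valued in $(0,1]$, and has $\xi(1)=e^{-\beta/4}<1$; since $\|J_{0,t}\|\ge e^{-t/2}\ge e^{-1/2}$ on $[0,1]$ (because $\det J_{0,t}=e^{-dt}$), we have $1+\|J_{0,t}h\|\le(1+e^{1/2})\|J_{0,t}\|$, which turns this into clause 3 with $C_*=1+e^{1/2}$.

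The main obstacle is the cross term $-3k\,X_s\cdot V_s$ in the second paragraph: because $\tilde Q$ does not eliminate it the way the $Q$ of Lemma \ref{lyapunov1} does, one must use the confining hypothesis $x\cdot\nabla_xU\ge c_1U+c_2x^2-c_3$ not merely for a lower bound on $U$ but to extract, after a balanced Young split, a strictly coercive lower bound on the drift of $\tilde Q$ --- and this is exactly what forces $k$ to be taken small, namely $k(9+4c_2)<8c_2$. Everything downstream (the two-sided bound on $\tilde Q$, clauses 1 and 2, and threading the parameters $k\to\kappa\to a_*=a_\sharp/(2\kappa)$ so that clause 3 covers the full range $r\in[r_0,2\kappa]$ required by clause 2) is routine.
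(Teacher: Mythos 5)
Your proof is correct, and it takes a more careful route than the paper's own. The paper's proof of Lemma \ref{lyapunov} simply invokes Lemma \ref{lyapunov1}, which is proved for the specific quadratic $Q(x,v) = |v|^2 + 2U(x) + \frac{1}{2}|x|^2 + x \cdot v$ chosen precisely so that the $x\cdot v$ cross term cancels in $\mathrm{d}Q$; it then sets $L(z) = e^{a_* Q(z)}$, derives $L_*, L^*$ from the two-sided bound $\frac{3}{4}|z|^2 \le Q(z) \le (2+M)|z|^2$, and reads off the three clauses. Strictly speaking this does not match the form $L = \exp(a_*(v^2 + 2U + 2kx^2 + kxv))$ announced in the lemma statement and in Theorem \ref{hypo} (no $k$ gives $2k = \frac{1}{2}$ and $k=1$ simultaneously), so there is an internal inconsistency in the paper that you implicitly resolve by taking the $k$-dependent $\tilde Q$ at face value and re-running the Lemma \ref{lyapunov1} argument for it. The genuinely new ingredient in your version is the Young split of the residual cross term $-3kX_s\cdot V_s$ in $\mathrm{d}\tilde Q$, which is where $x\cdot\nabla_xU \ge c_1U + c_2x^2 - c_3$ has to be used coercively rather than merely for lower-boundedness, and which produces the explicit admissible range $k(9+4c_2) < 8c_2$; the paper avoids this entirely by tuning $Q$ so the cross term vanishes. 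What the paper's route buys is brevity; what yours buys is that $k$ becomes an honest degree of freedom with a computed range, consistent with the stated form of $L$. You also make explicit a step the paper leaves implicit: the reduction of the $1 + \|J_{0,t}h\|$ factor in clause 3 of Assumption 1 to the quantity $\|J_{0,t}\|$ that the Lyapunov estimate actually controls, via the deterministic fact $\det J_{0,t} = e^{-dt}$, giving the concrete constant $C_* = 1 + e^{1/2}$. One small bookkeeping remark: the $k$ appearing in your $P_k$ and the $k$ in $\tilde Q$ should be kept notationally distinct --- the drift bound $\tilde H_s \ge \mu P_{k'} - C$ holds for some $k'$ determined by $k$, $c_1$ and $\mu$, not for $k' = k$ in general --- but this does not affect the argument.
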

\begin{proof}
We can add a constant in the definition of $U$ so we may as well take $U \geq 0$. Since $Hess(U) \leq M$ we have
\[ \frac{3}{4}(|x|^2 + |v|^2) \leq Q(x,v) \leq (2+M)(|x|^2+|v|^2). \] We also have that 
\[ |z| e^{a(2+M)|z|^2} \leq \frac{1}{a} e^{(3+M)|z|^2} \leq \frac{1}{a} \left( e^{3a|z|^2/4} \right)^{4(3+M)/3} \leq \frac{1}{a}e^{4a(3+M)Q(z)/3}. \]
Therefore if $8a(3+M)/3 \leq a^*$ Then by lemma \ref{lyapunov1} we have that 
\[ \mathbb{E}\left( \left( |\Phi_t(z)| e^{a(2+M)|\Phi_t(z)|^2} \right)^2 \right) \leq \frac{1}{a} e^{4a(3+M) e^{-\beta t/4} Q(z)/3} \] Therefore if we set 
\[ a_* = 3a^*/8(3+M) \] then we can set
\[ L(z)= e^{a_* Q(z)}, L_*(z) = e^{3a_*|z|^2/4}, L^*(z) = e^{(2+M)a_*|z|^2}. \] Then our calculation shows that 
\[  L_* \leq L \leq L^*,\] and furthermore that
\[ |z| L^*(|z|) \leq L_*(z)^\kappa, \] with $\kappa = 3(3+M)/3$. Then lemma \ref{lyapunov1} shows that 
\[ \mathbb{E}( L^r(\Phi_t(z))) \leq L^{re^{-\beta t/4}}(z), \] for all $r \leq 2 \kappa$.
\end{proof}
Now we briefly describe how the proof of Hairer and Mattingly uses this lemma to show convergence for $\rho(z_1, z_2) \geq 4C_1$ with $C_1$ given below.
\begin{prop} \label{contractionl}
If we define $\rho$ as above then for every $\alpha\geq 1/2, T_1>0$ there exists constants $C_1,C$ such that for all $t \geq T_1$
\[ \mathbb{E}(\rho(\Phi_t(z_1), \Phi_t(z_2))) \leq C \rho(z_1,z_2), \]
\[ \mathbb{E}(\rho(\Phi_t(z_1), \Phi_t(z_2))) \leq C_1 + \alpha \rho(z_1, z_2). \]
Furthermore, there exists some radius $R_2$ such that if $|z_1|$ or $|z_2| \geq R_2$ then,
\[ \mathbb{E}(\rho(\Phi_t(z_1), \Phi_t(z_2))) \leq \alpha\rho(z_1, z_2). \]
\end{prop}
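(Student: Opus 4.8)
The plan is to use the standard Hairer--Mattingly device of pushing a near-optimal curve through the flow, together with the Jacobian Lyapunov estimate of Lemma~\ref{lyapunov1}. Fix $z_1,z_2$ and $\epsilon>0$ and choose a $C^1$ curve $\gamma\colon[0,1]\to\mathbb{R}^{2d}$ from $z_1$ to $z_2$ with $\int_0^1 L(\gamma(s))\|\dot\gamma(s)\|\,\mathrm{d}s\le\rho(z_1,z_2)+\epsilon$. For a fixed realisation of the noise the flow $\Phi_t$ is a $C^1$ diffeomorphism, so $s\mapsto\Phi_t(\gamma(s))$ is a $C^1$ curve from $\Phi_t(z_1)$ to $\Phi_t(z_2)$; hence, for almost every Brownian path,
\[
\rho(\Phi_t(z_1),\Phi_t(z_2))\le\int_0^1 L(\Phi_t(\gamma(s)))\,\|\nabla_z\Phi_t(\gamma(s))\dot\gamma(s)\|\,\mathrm{d}s\le\int_0^1 L(\Phi_t(\gamma(s)))\,\|J_{0,t}(\gamma(s))\|\,\|\dot\gamma(s)\|\,\mathrm{d}s .
\]
Taking expectations and applying Lemma~\ref{lyapunov1} with $a=a_*$ (extended from $t\in[0,1]$ to all $t>0$ by the Markov property and the cocycle identity $J_{0,t}=J_{s,t}J_{0,s}$, a routine iteration that preserves the bound) gives
\[
\mathbb{E}\big(\rho(\Phi_t(z_1),\Phi_t(z_2))\big)\le\int_0^1 L(\gamma(s))^{q}\,\|\dot\gamma(s)\|\,\mathrm{d}s ,\qquad q:=e^{-\beta t/4}\in(0,1).
\]
Since $L\ge1$ and $q\le1$ we have $L^{q}\le L$, so letting $\epsilon\to0$ gives the first estimate with $C=1$; the same remark settles all three estimates when $\alpha\ge1$, so assume $\alpha\in(0,1)$ from now on.

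For the additive estimate I would use the elementary pointwise bound $L^{q}\le\alpha L+g(L)$, where $g(\ell):=(\ell^{q}-\alpha\ell)_+$. Since $q-1<0$, $g$ is bounded by a constant $G=G(\alpha,q)$ and vanishes for $\ell\ge\Lambda:=\alpha^{1/(q-1)}>1$; choosing $R_\Lambda$ with $L_*(R_\Lambda)=\Lambda$ we get $\{L<\Lambda\}\subseteq B_{R_\Lambda}$, while $|z|>R_\Lambda$ forces $L(z)>\Lambda$ and so $g(L(z))=0$. The one point that needs care is that $\int_0^1 g(L(\gamma(s)))\|\dot\gamma(s)\|\,\mathrm{d}s$ could a priori be of order $\rho(z_1,z_2)$, because inside $B_{R_\Lambda}$ the weight $L$ is close to $1$ and a near-optimal curve could run for a long Euclidean time there. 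I would remove this danger by rerouting $\gamma$: if it meets $\bar B_{R_\Lambda}$, let $s_1\le s_2$ be the first entry and last exit times and replace $\gamma|_{[s_1,s_2]}$ by the straight segment from $\gamma(s_1)$ to $\gamma(s_2)$, which stays in the convex ball $\bar B_{R_\Lambda}$ and has Euclidean length at most $2R_\Lambda$ (smoothing the two corners at negligible cost). For the new curve $\tilde\gamma$ the integrand $g(L(\tilde\gamma))$ is supported on the inserted segment, so $\int g(L(\tilde\gamma))\|\dot{\tilde\gamma}\|\le2R_\Lambda G$, while $\int L(\tilde\gamma)\|\dot{\tilde\gamma}\|\le\rho(z_1,z_2)+\epsilon+2R_\Lambda L^*(R_\Lambda)$ since the two outer arcs are sub-arcs of the near-optimal $\gamma$ and the segment contributes at most $2R_\Lambda\sup_{B_{R_\Lambda}}L\le 2R_\Lambda L^*(R_\Lambda)$. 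Feeding $\tilde\gamma$ into the inequality of the first paragraph and letting $\epsilon\to0$ yields $\mathbb{E}(\rho(\Phi_t(z_1),\Phi_t(z_2)))\le\alpha\rho(z_1,z_2)+C_1$ with $C_1:=2R_\Lambda(G+\alpha L^*(R_\Lambda))$. Since $\Lambda,R_\Lambda,G,L^*(R_\Lambda)$ depend on $t$ only through $q=e^{-\beta t/4}\le e^{-\beta T_1/4}<1$ and are monotone in $q$, they are bounded uniformly over $t\ge T_1$, so $C_1=C_1(\alpha,T_1)$ as required.

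For the last estimate, apply the additive estimate with $\alpha/2$ in place of $\alpha$ to get $\mathbb{E}(\rho(\Phi_t(z_1),\Phi_t(z_2)))\le\tfrac{\alpha}{2}\rho(z_1,z_2)+C_1'$ with $C_1':=C_1(\alpha/2,T_1)$, and set $\rho_0:=2C_1'/\alpha$. If $\rho(z_1,z_2)\ge\rho_0$ the right-hand side is $\le\alpha\rho(z_1,z_2)$ and we are done. If $\rho(z_1,z_2)<\rho_0$ and, say, $|z_1|\ge R_2$ (the case $|z_2|\ge R_2$ is symmetric), I would use the elementary lower bound that any curve joining $z_1$ to a sphere of radius $r<|z_1|$ has $\rho$-length at least $\int_r^{|z_1|}L_*(u)\,\mathrm{d}u\ge|z_1|-r$ (from $L_*\ge1$ and $L(z)\ge L_*(|z|)$): with $\Lambda:=\alpha^{1/(q-1)}$ and $R_\Lambda$ as before (now for the exponent $\alpha$) and $R_2>R_\Lambda+\rho_0+1$, no near-optimal curve from $z_1$ to $z_2$ can reach radius $R_\Lambda$, since such a curve would have $\rho$-length $\ge|z_1|-R_\Lambda\ge R_2-R_\Lambda>\rho_0>\rho(z_1,z_2)$, contradicting near-optimality. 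Hence every near-optimal $\gamma$ stays in $\{L>\Lambda\}$, where $L(\gamma)^{q}\le\Lambda^{q-1}L(\gamma)=\alpha L(\gamma)$, so $\mathbb{E}(\rho(\Phi_t(z_1),\Phi_t(z_2)))\le\alpha\int_0^1 L(\gamma(s))\|\dot\gamma(s)\|\,\mathrm{d}s\le\alpha(\rho(z_1,z_2)+\epsilon)$; letting $\epsilon\to0$, and taking $R_2=R_2(\alpha,T_1)$ large enough that these bounds hold uniformly over $t\ge T_1$ (recall $R_\Lambda$ is $t$-uniform on $t\ge T_1$), completes the argument.

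The only genuinely non-routine step is the one flagged in the second paragraph: the crude pointwise bound $L^{q}\le\alpha L+G$ is not enough, because integrating $G$ against $\|\dot\gamma\|$ along a curve whose Euclidean length is comparable to $\rho(z_1,z_2)$ would reintroduce a term of order $\rho$; what rescues the argument is that the excess $g(L)$ is supported in a fixed compact ball through which the connecting curve can be rerouted along one short segment. Everything else — the flowing-the-curve inequality, the extension of Lemma~\ref{lyapunov1} to all $t$, and checking that every constant depends on $t$ only through $q\le e^{-\beta T_1/4}$ — is routine bookkeeping.
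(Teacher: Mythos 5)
Your proof is correct and follows essentially the same route as the paper's: push a near-optimal curve through the flow, invoke the Jacobian--Lyapunov estimate of Lemma~\ref{lyapunov1} to replace $L$ by $L^{q}$ with $q=e^{-\beta t/4}$, absorb the excess over $\alpha L$ into a compactly supported term, reroute the curve through a straight segment inside the relevant ball to keep that excess bounded, and for the last estimate choose $R_2$ large enough that near-optimal curves with one endpoint outside $B_{R_2}$ either avoid the ball or already have $\rho\ge\rho_0$. Your pointwise decomposition $L^{q}\le\alpha L+g(L)$ with $g$ compactly supported, and your explicit note about extending Lemma~\ref{lyapunov1} past $t=1$ by the cocycle property, are slightly more careful statements of what the paper does implicitly, but they do not change the argument.
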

\begin{proof}
Fix $z_1, z_2$, $t>T_1$ then there exists some curve joining $z_1, z_2$ such that 
\[ \int_0^1 L^r(\gamma(s))|\dot{\gamma}(s)| \mathrm{d}s \leq \rho_r(z_1, z_2) + \epsilon. \] So then we can evolve every point along this curve by $\Phi_t$ to make a curve joining $\Phi_t(z_1), \Phi_t(z_2)$. Using lemma \ref{lyapunov} this gives
\[ \mathbb{E}(\rho(\Phi_t(z_1), \Phi_t(z_2)) \leq \mathbb{E} \left(\int_0^1 L(\Phi_t(\gamma(s))) |J_{0,t} \dot{\gamma}(s) | \mathrm{d}s  \right) \leq C \int_0^1L(\gamma(s)) |\dot{\gamma}(s)| \mathrm{d}s \leq C(\rho(z_1,z_2) + \epsilon).\] $\epsilon$ was arbitrary. In fact we could have written
\[ \mathbb{E}(\rho(\Phi_t(z_1), \Phi_t(z_2))) \leq C \int_0^1 L^{ e^{-\beta t/4}} (\gamma(s)) |\dot{\gamma}(s)| \mathrm{d}s. \] Then since $L$ grows at infinity there is some $R$ so that $CL^{ e^{-\beta t/4}}(z) \leq \alpha L(z)$ for $|z| \geq R$. Therefore
\[ \mathbb{E} (\rho(\Phi_t(z_1), \Phi_t(z_2))) \leq \alpha \rho(z_1, z_2) + \int_0^1 L(\gamma(s)) |\dot{\gamma(s)}| 1_{\gamma(s) \in B(0,R)} \mathrm{d}s\]
Now we recall that there exists constants $m$ and $M$ so that
\[ Ce^{m|z|^2} \leq L(z) \leq e^{M|z|^2}.\] If we replace the segment of $\gamma$ in $B(0,R)$ by a straight line segment this means we can never need to pick up more than 
\[ Re^{MR^2}+\epsilon \] in our integral while travelling through $B(0,R)$ so we have that 
\[   \mathbb{E}(\rho(\Phi_t(z_1), \Phi_t(z_2))) \leq \alpha \rho(z_1, z_2) + CRe^{MR^2}.\]
So we know we are contractive if $\rho(z_1,z_2) \geq 4C_1$ say, and also we can see from this proof that we will be contractive whenever almost optimal paths between $z_1, z_2$ do not pass through the $B(0,R)$. We can calculate that the distance, $\rho$, from $z$ to $B(0,R)$ is bounded below by 
\[ C \int_R^{|z|} e^{mr^2} \mathrm{d}r. \] Therefore we have $R_2$ such that if $|z|>R_2$ then this will be greater than $4C_1$. This means that if $\gamma$ is a path from $z_1, z_2$ going through $B(0,R)$ with $|z_1|$ or $|z_2|$ greater than $R_2$ then \[\int_\gamma L(\gamma(s)) |\dot{\gamma}(s)| \mathrm{d}s \geq 4C_1. \] This means that if $|z_1| \geq R_2$ or $|z_2| \geq R_2$ then either close to optimal paths do not go through $B(0,R)$ or $\rho(z_1,z_2) \geq 4C_1$. Therefore 
\[ \mathbb{E}(\rho(\Phi_t(z_1), \Phi_t(z_2))) \leq \alpha \rho(z_1, z_2), \] for $|z_1|\geq R_2$ or $|z_2| \geq R_2$.
\end{proof}
\subsection{Assumption 2}
Assumption 2 looks very similar to the gradient bounds found in Malliavin's proof of H\"{o}rmander's theorem see for example \cite{N86}. It seems to be more of a technical challenge than anything else to make the estimates here explicit. However, it is simpler to use more standard hypocoercive techniques based on point wise Bakry-Emery style estimates on the semigroup $\mathcal{P}_t$. Let us write
\[ \Gamma(f,g) = 2\nabla_x f \cdot \nabla_x g - \nabla_x f \cdot \nabla_v g - \nabla_v f \cdot \nabla_x f + 2 \nabla_v f \cdot \nabla_v g.  \]
Now write 
\[ L = \Delta +v \cdot \nabla_x - v \cdot \nabla_v - \nabla_x U \cdot \nabla_v \] this is the forwards operator for the solution to the SDE. We set $\Gamma_2(f) = L \Gamma(f,f) - 2 \Gamma(f, Lf)$. 
\begin{lemma} \label{gradientbound} For $\mathcal{P}_t$ the semigroup associated to the SDE when $U''$ is bounded we have that for an explicit constant $C_M$
\[ |\nabla_x \mathcal{P}_t f|^2 + |\nabla_v \mathcal{P}_t f|^2 \leq  C_M \mathcal{P}_t (f^2) + 3e^{-t/3} \mathcal{P}_t\left( |\nabla_x f|^2 + |\nabla_v f|^2\right). \]
\end{lemma}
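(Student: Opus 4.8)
The plan is to run the Bakry--\'{E}mery / Villani $\Gamma_2$ scheme: the quadratic form $\Gamma$ in the statement has been chosen precisely so that its iterated form $\Gamma_2$ dominates $\Gamma$ itself, up to a remainder visible only in the velocity variable. The whole argument then rests on the pointwise ``twisted Bochner'' estimate
\[ \Gamma_2(f) \;\geq\; \tfrac13\,\Gamma(f,f) \;-\; K_M\,|\nabla_v f|^2, \]
valid for smooth $f$, with $K_M$ depending only on an operator-norm bound $M$ for $\mathrm{Hess}(U)$. Granting this, fix $t>0$, put $g_s = \mathcal{P}_{t-s}f$, and set $\psi(s) = \mathcal{P}_s(\Gamma(g_s,g_s))$, $\chi(s) = \mathcal{P}_s(g_s^2)$ on $[0,t]$. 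Since $\partial_s g_s = -Lg_s$ and the only second-order part of $L$ is $\Delta_v$, the product rule for $L$ gives $\psi'(s) = \mathcal{P}_s(\Gamma_2(g_s))$ and $\chi'(s) = \mathcal{P}_s\big(L(g_s^2) - 2g_sLg_s\big) = 2\,\mathcal{P}_s(|\nabla_v g_s|^2) \geq 0$. Feeding in the Bochner estimate gives $\psi'(s) \geq \tfrac13\psi(s) - \tfrac{K_M}{2}\chi'(s)$, hence $\frac{d}{ds}\big(e^{-s/3}\psi(s)\big) \geq -\tfrac{K_M}{2}e^{-s/3}\chi'(s) \geq -\tfrac{K_M}{2}\chi'(s)$; integrating over $[0,t]$ and discarding $-\chi(0)\leq 0$ yields $\psi(0) \leq e^{-t/3}\psi(t) + \tfrac{K_M}{2}\chi(t)$. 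Finally $\Gamma(f,f) = 2|\nabla_x f|^2 - 2\nabla_x f\cdot\nabla_v f + 2|\nabla_v f|^2$ is positive definite with eigenvalues $1$ and $3$, so $|\nabla_x\mathcal{P}_t f|^2 + |\nabla_v\mathcal{P}_t f|^2 \leq \psi(0)$ and $\psi(t) \leq 3\,\mathcal{P}_t(|\nabla_x f|^2 + |\nabla_v f|^2)$, while $\chi(t) = \mathcal{P}_t(f^2)$; this is exactly the claimed inequality with $C_M = K_M/2$.

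The substance lies in the Bochner estimate, which is a commutator computation. One computes $[\nabla_v, L]f = \nabla_x f - \nabla_v f$, the $\nabla_x f$ term being precisely the bracket $[v\cdot\nabla_x,\nabla_v]$ at the root of the hypoellipticity, and $[\nabla_x, L]f = -\mathrm{Hess}(U)(x)\,\nabla_v f$, coming from the potential term $\nabla_xU\cdot\nabla_v$. Writing $a = \nabla_x f$, $b = \nabla_v f$ and expanding $\Gamma_2(f) = L\Gamma(f,f) - 2\Gamma(f,Lf)$ with the product rule, every term containing $La$ or $Lb$ cancels --- the defining feature of $\Gamma_2$ --- leaving
\[ \Gamma_2(f) = \Big(4\|\nabla_v a\|^2 - 4\,\nabla_v a : \nabla_v b + 4\|\nabla_v b\|^2\Big) + 2|a|^2 - 6\,a\cdot b + 4|b|^2 + 4\,a\cdot\mathrm{Hess}(U)\,b - 2\,b\cdot\mathrm{Hess}(U)\,b, \]
where $:$ is the Frobenius product. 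The bracketed second-order block equals $2\|\nabla_v a\|^2 + 2\|\nabla_v b\|^2 + 2\|\nabla_v a - \nabla_v b\|^2 \geq 0$, so it may be dropped. For the rest, subtract $\tfrac13\Gamma(f,f) = \tfrac23|a|^2 - \tfrac23 a\cdot b + \tfrac23|b|^2$ and absorb the cross terms $-\tfrac{16}{3}a\cdot b$ and $4\,a\cdot\mathrm{Hess}(U)\,b$ into multiples of $|a|^2$ and $|b|^2$ by Young's inequality, using $|a\cdot\mathrm{Hess}(U)\,b| \leq M|a||b|$ and $|b\cdot\mathrm{Hess}(U)\,b| \leq M|b|^2$; distributing the $|a|^2$-budget so that its coefficient vanishes leaves a nonpositive multiple of $|b|^2$, controlled once $K_M$ is of order $M^2+M+1$ (a fully explicit value drops out of the computation; its precise size is irrelevant).

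The main obstacle is just the bookkeeping in the $\Gamma_2$ expansion: tracking signs, checking that the $La$ and $Lb$ terms genuinely cancel, and keeping the $\mathrm{Hess}(U)$ contributions straight. Around this there are the usual regularity checks --- the identities $\psi'(s) = \mathcal{P}_s(\Gamma_2(g_s))$ and $\chi'(s) = 2\mathcal{P}_s(|\nabla_v g_s|^2)$ should first be justified for $f$ in a dense class of smooth functions for which one may differentiate under $\mathcal{P}_s$ and commute $\nabla_x,\nabla_v$ through the semigroup (legitimate since $U$ is smooth, with a growth bound on the Jacobian as in Lemma \ref{lyapunov1}), the inequality then passing to $f\in C^1_b$ by approximation. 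It is also worth recording that the estimate genuinely uses a two-sided operator bound $\|\mathrm{Hess}(U)(x)\|\leq M$; with only an upper bound one inserts whatever two-sided bound is available.
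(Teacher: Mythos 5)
Your argument is correct and is essentially the same Bakry--\'Emery $\Gamma_2$ interpolation that the paper uses: the same twisted form $\Gamma$, the same expansion of $\Gamma_2$ (your $a$, $b$ display matches the paper's line-by-line), the same use of the interpolant $s\mapsto \mathcal{P}_s\Gamma(\mathcal{P}_{t-s}f)$ and Gr\"onwall-type estimate. The only cosmetic difference is that you track the $f^2$-contribution by a second functional $\chi(s)=\mathcal{P}_s(g_s^2)$ and use $\chi'(s)=2\mathcal{P}_s|\nabla_v g_s|^2\ge 0$, whereas the paper absorbs the same term into the modified form $\tilde\Gamma(f)=\Gamma(f)+C_M f^2$ and then bounds $\mathcal{P}_s(\mathcal{P}_{t-s}f)^2\le\mathcal{P}_t(f^2)$; the two bookkeepings are equivalent and give the same inequality.
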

\begin{proof}
\begin{align*}
\Gamma_2(f) = & 4| \nabla_x \nabla_v f|^2 -4 \nabla_x \nabla_v f : \nabla_v \nabla_v f +4 |\nabla_v \nabla_v f|^2 +4 \nabla_x f \mbox{Hess}(U) \nabla_v f -2 \nabla_v f \mbox{Hess}(U) \nabla_v f  \\ &+ 2 |\nabla_x f|^2 - 2 \nabla_x f \cdot \nabla_v f +4 |\nabla_v f|^2 -4 \nabla_x f \cdot \nabla_v f\\
\geq & 4 \nabla_x \mbox{Hess}(U) \nabla_v f - 2 \nabla_v \mbox{Hess}(U) \nabla_v f +2 |\nabla_x f|^2 -6 \nabla_x f \cdot \nabla_v f + 4 |\nabla_v f|^2\\
\geq & (2-3 \epsilon_1 -2M \epsilon_2)|\nabla_x f|^2 +\left(4 - \frac{3}{\epsilon_1} - \frac{2M}{\epsilon_2} -2M\right)|\nabla_v f|^2
\end{align*}
We set $ \epsilon_1 = 1/6$ and $ \epsilon_2 = 1/4M$ to get
\[ \Gamma_2(f) \geq |\nabla_x f|^2 - (14 + 6M^2 +2M) |\nabla_v f|^2. \]
Let $\tilde{\Gamma}(f) = \Gamma(f) + (15 +6M^2 +2M) f^2$, and write $C_M = 15 + 6M^2 +2M$. Then we get
\[ L\tilde{\Gamma}(f) - 2 \tilde{\Gamma}(f, Lf) \geq |\nabla_x f|^2 + |\nabla_v f|^2 \geq \frac{1}{3} \Gamma(f) = \frac{1}{3} \left(\tilde{\Gamma}(f) - C_M f^2 \right). \]

Therefore, let
\[ \psi(s) = \mathcal{P}_s \tilde{\Gamma}(\mathcal{P}_{t-s}(f)). \] Then
\begin{align*}
\dot{\psi}(s) \geq \frac{1}{3} \left( \mathcal{P}_s \tilde{\Gamma}( \mathcal{P}_{t-s} f) - C_M \mathcal{P}_s (\mathcal{P}_{t-s})^2 \right) 
\end{align*} Hence,
\[ \frac{\mathrm{d}}{\mathrm{d}s} (e^{-s/3} \psi(s)) \geq - \frac{C_M}{3}e^{-s/3} \mathcal{P}_s ( \mathcal{P}_{t-s}f)^2 \geq - \frac{C_M}{3}e^{-s/3} \mathcal{P}_t (f^2) .\] So
\[ e^{-s/3}\psi(s) - \psi(0) \geq - C_M\left(1-e^{-s/3} \right) \mathcal{P}_t (f^2)  \] which means that
\[ e^{-t/3} \mathcal{P}_t( \Gamma(f)) - \Gamma(\mathcal{P}_t f) - C_M (\mathcal{P}_t f)^2 \geq - C_M \mathcal{P}_t (f^2). \] Rearranging this gives
\[ \Gamma(\mathcal{P}_t f) + C_M ( \mathcal{P}_t f)^2 \leq C_M \mathcal{P}_t(f^2) + e^{-t/3} \mathcal{P}_t ( \Gamma(f)). \] We also have that
\[ |\nabla_x f|^2 + |\nabla_vf|^2  \leq \Gamma(f) \leq 3 \left(|\nabla_x f|^2 + |\nabla_v f|^2 \right).\] So we have that
\[ |\nabla_x \mathcal{P}_t f|^2 + |\nabla_v \mathcal{P}_t f|^2 \leq  C_M \mathcal{P}_t (f^2) + 3e^{-t/3} \mathcal{P}_t\left( |\nabla_x f|^2 + \nabla_v f|^2\right). \]
\end{proof}
Now we look at how this is used to show convergence in the main theorem. We define a new metric
\[ d(z_1, z_2) = \left( \frac{\rho_r(z_1,z_2)}{\delta} \wedge 1 \right) + \beta \rho(z_1, z_2). \]
We see that for $\rho(z_1, z_2) > 4C_1$ proposition \ref{contractionl} still gives a contraction in this metric for every $\beta$.
\begin{prop} \label{contractiong}
If $\rho_r(z_1, z_2) < \delta$ then we have that for $t$ sufficiently large
\[ \mathcal{W}_{1,d}(\mathcal{P}_t \delta_{z_1}, \mathcal{P}_t \delta_{z_2}) \leq \gamma d(z_1,z_2) \] for some explicit $\gamma <1$. 
\end{prop}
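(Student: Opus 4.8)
The plan is to use Kantorovich--Rubinstein duality. Since
\[
\mathcal{W}_{1,d}(\mathcal{P}_t^*\delta_{z_1},\mathcal{P}_t^*\delta_{z_2})=\sup\bigl\{\mathcal{P}_t\phi(z_1)-\mathcal{P}_t\phi(z_2)\ :\ |\phi(z)-\phi(z')|\le d(z,z')\ \text{for all }z,z'\bigr\},
\]
it suffices to produce a $\gamma<1$, independent of $\phi$, of $z_1,z_2$ and of $\beta$, with $|\mathcal{P}_t\phi(z_1)-\mathcal{P}_t\phi(z_2)|\le\gamma\,d(z_1,z_2)$ for every $d$--Lipschitz $\phi$; subtracting a constant we may assume $\phi(z_1)=0$. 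Given $\varepsilon>0$, pick a path $\eta$ from $z_1$ to $z_2$ with $\int_0^1L^r(\eta(s))\|\dot\eta(s)\|\,\mathrm{d}s\le\rho_r(z_1,z_2)+\varepsilon$. The geometric fact I would exploit throughout is that, because $L\ge1$, this path has Euclidean length $\le\rho_r(z_1,z_2)+\varepsilon<\delta+\varepsilon$, hence stays in a $\delta$--ball around $z_1$; using $Hess(U)\le M$ to control $\nabla Q$, and the elementary observation that $\rho_r(z_1,z_2)<\delta$ forces the path to be \emph{extremely} short once $|z_1|$ is large, one gets $\sup_sL(\eta(s))\le C_0L(z_1)$ with $C_0$ independent of $z_1,z_2$. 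Also $\int_0^1\|\dot\eta(s)\|\,\mathrm{d}s\le\rho_r(z_1,z_2)+\varepsilon$, and since $\rho_r(z_1,z_2)<\delta$ we have $d(z_1,z_2)=\rho_r(z_1,z_2)/\delta+\beta\rho(z_1,z_2)\ge\rho_r(z_1,z_2)/\delta$.

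Now I would write $|\mathcal{P}_t\phi(z_1)-\mathcal{P}_t\phi(z_2)|\le\int_0^1\|\nabla\mathcal{P}_t\phi(\eta(s))\|\,\|\dot\eta(s)\|\,\mathrm{d}s$ and insert the gradient bound. Two features of Lemma \ref{gradientbound} are crucial: it carries no Lyapunov prefactor (i.e.\ $p=0$), and its proof in fact yields a \emph{variance} on the right, namely
\[
\|\nabla\mathcal{P}_t\phi(z)\|^2\le C_M\bigl(\mathcal{P}_t(\phi^2)(z)-(\mathcal{P}_t\phi)^2(z)\bigr)+3e^{-t/3}\,\mathcal{P}_t\bigl(\|\nabla_x\phi\|^2+\|\nabla_v\phi\|^2\bigr)(z).
\]
A variance sees only the oscillation of $\phi$, and this is what makes the estimate work. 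The $e^{-t/3}$--term I would control by bounding $\|\nabla\phi\|$ by the infinitesimal Lipschitz constant of $d$, which is at most $(1/\delta+1)L$, and then using $\mathcal{P}_tL^{2}\le L^{2\varpi(t)}$ from Lemma \ref{lyapunov} (here $\varpi(t)=e^{-\beta t/4}\to0$ is the fixed rate of Lemmas \ref{lyapunov1}--\ref{lyapunov}); after integrating along $\eta$ and using $\sup_sL(\eta(s))\le C_0L(z_1)$, this contribution is handled exactly like the variance term below and, carrying the extra factor $e^{-t/3}$, is $\le\epsilon(t)\,d(z_1,z_2)$ with $\epsilon(t)\to0$ uniformly. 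For the variance itself I would write $\mathcal{P}_t(\phi^2)(z)-(\mathcal{P}_t\phi)^2(z)=\tfrac12\mathbb{E}[(\phi(X)-\phi(X'))^2]$ for $X,X'$ i.i.d.\ with law $\mathcal{P}_t^*\delta_z$, bound $|\phi(X)-\phi(X')|\le1+\beta\rho(X,X')\le1+\beta\bigl(\rho(X,0)+\rho(0,X')\bigr)$ using $\rho_r/\delta\wedge1\le1$, then the direct estimate $\rho(0,w)\le CL(w)$ (bound $L$ along the segment $[0,w]$, using $U\ge0$ and the coercivity of $U$) together with $\mathbb{E}[L^2(\Phi_t(z))]\le L^{2\varpi(t)}(z)\le L^r(z)$ for $t$ large. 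With $\sup_sL(\eta(s))\le C_0L(z_1)$ this gives $\sqrt{\mathcal{P}_t(\phi^2)(\eta(s))-(\mathcal{P}_t\phi)^2(\eta(s))}\le1+C\beta L^{r/2}(z_1)$.

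Putting this together with $\int_0^1\|\dot\eta\|\le\rho_r(z_1,z_2)+\varepsilon$ yields
\[
|\mathcal{P}_t\phi(z_1)-\mathcal{P}_t\phi(z_2)|\le\bigl(\sqrt{C_M}+\epsilon(t)\bigr)\rho_r(z_1,z_2)+C\beta L^{r/2}(z_1)\,\rho_r(z_1,z_2).
\]
The first term is $\le(\sqrt{C_M}+\epsilon(t))\,\delta\,d(z_1,z_2)$ because $\rho_r(z_1,z_2)/\delta\le d(z_1,z_2)$; as $C_M$ is an explicit absolute constant and we are free to fix $r$ and then $\delta$, this is below $\tfrac14d(z_1,z_2)$ once $\delta<1/(4\sqrt{C_M})$ say. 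For the second term I would take $r<\tfrac23$ (legitimate since $r_0$ may be taken small in Lemma \ref{lyapunov}) and split on the size of $z_1$: if $L^{r/2}(z_1)\le\delta^{-1/2}$ then $\beta L^{r/2}(z_1)\rho_r(z_1,z_2)\le\delta^{-1/2}\rho_r(z_1,z_2)=\sqrt\delta\,(\rho_r(z_1,z_2)/\delta)\le\sqrt\delta\,d(z_1,z_2)$; if $L^{r/2}(z_1)>\delta^{-1/2}$ then $|z_1|$ is large, the $\rho$--geodesic between $z_1$ and $z_2$ is again Euclidean--short and confined where $L\asymp L(z_1)$, so $\rho_r(z_1,z_2)\le CL^{r-1}(z_1)\rho(z_1,z_2)$, giving $\beta L^{r/2}(z_1)\rho_r(z_1,z_2)\le C\beta L^{3r/2-1}(z_1)\rho(z_1,z_2)\le C\,\delta^{1/r-3/2}\,\beta\rho(z_1,z_2)\le C\,\delta^{1/r-3/2}\,d(z_1,z_2)$, where we used $3r/2-1<0$ and $L(z_1)>\delta^{-1/r}$. (Alternatively, for the far--out regime one may simply invoke Proposition \ref{contractionl}: its ``furthermore'' clause contracts $\rho$, and the Lyapunov estimate applied to $L^r$ contracts $\rho_r$ --- hence $d_1$ --- once $L(z_1)$ is large, because $L^{\varpi(t)-r}(z_1)$ is then tiny.) Hence $\gamma=O(\delta)+\epsilon(t)$, and choosing $r$, then $\delta$, then $t$ large makes $\gamma<1$, uniformly in $\beta\in(0,1)$.

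The step I expect to be the real obstacle is the uniform control of the variance term: a $d$--Lipschitz test function is only Lipschitz for the large metric $\beta\rho$, so it may grow like $\beta L(z_1)$ far from $z_1$, and $\mathcal{P}_t$ transports mass inward, precisely towards where $\phi$ is large. It is exactly this that forces $r<1$ --- so that the resulting $L^{r/2}(z_1)\rho_r(z_1,z_2)$ can be reabsorbed, either into $\delta\,d_1(z_1,z_2)$ when $z_1$ is moderate or into the $\rho_r$--versus--$\rho$ comparison when $z_1$ is large --- and it is the careful bookkeeping of these powers of $L$, hand in hand with the Euclidean--shortness of almost $\rho_r$--minimising paths and the fact that they stay where $L$ is comparable to $L(z_1)$, that forms the technical heart of the argument.
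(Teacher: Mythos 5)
Your proposal is essentially correct in spirit, and it relies on the same two lemmas (the Bakry--Emery gradient bound and the Lyapunov estimate), but it takes a noticeably more involved route than the paper's own proof, and I think it is worth explaining why the extra machinery you introduce is unnecessary.

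The paper normalises $\phi(0)=0$ rather than $\phi(z_1)=0$. This seemingly small choice is the crux: once $\phi(0)=0$, the $d$--Lipschitz condition gives $|\phi(w)|\le d(w,0)\le 1+\beta\rho(w,0)\le C L^{\kappa}(w)$ and $|\nabla\phi(w)|\le(1/\delta+\beta)L^{*}(w)$ \emph{uniformly in $z_1,z_2$}. Plugging these into Lemma \ref{gradientbound} as stated (with the bare $\mathcal{P}_t(\phi^2)$, not the variance) and applying Lemma \ref{lyapunov}, $\mathcal{P}_t(L^{2\kappa})(z)\le L^{2\kappa e^{-\beta t/4}}(z)$, gives a clean pointwise estimate
\[ |\nabla\mathcal{P}_t\phi(z)|\le\bigl(C+3e^{-t/3}(1/\delta+\beta)\bigr)L^{\kappa e^{-\beta t/4}}(z). \]
One then takes $t$ large enough that $\kappa e^{-\beta t/4}\le r$ and $3e^{-t/3}\le1/4$, obtains $|\nabla\mathcal{P}_t\phi(z)|\le\tfrac{3}{4\delta}L^{r}(z)$, and integrates $L^{r}(\eta(s))|\dot\eta(s)|$ along a near--optimal path --- which is precisely $\rho_r(z_1,z_2)$ --- to conclude $\mathcal{P}_t\phi(z_1)-\mathcal{P}_t\phi(z_2)\le\tfrac{3}{4}\rho_r(z_1,z_2)/\delta\le\tfrac{3}{4}d(z_1,z_2)$. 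No case analysis, no dependence on $L(z_1)$, no variance.

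Your worry that ``$\phi$ may grow like $\beta L(z_1)$ far from $z_1$, and $\mathcal{P}_t$ transports mass inward, precisely towards where $\phi$ is large'' is what pushes you toward the variance form and the $|z_1|$--split. But this worry is already neutralised by the two devices above: the normalisation $\phi(0)=0$ makes the envelope $CL^{\kappa}$ for $\phi$ independent of $z_1,z_2$, and the Lyapunov decay $\mathcal{P}_t L^{2\kappa}\le L^{2\kappa e^{-\beta t/4}}$ contracts the exponent below $r$ for $t$ large. That contraction of the exponent is the mechanism that actually makes the estimate close, and it works just as well for the uncentred second moment as for the variance. Your variance refinement is real (the proof of Lemma \ref{gradientbound} does indeed produce $\mathcal{P}_t(f^2)-(\mathcal{P}_tf)^2$ before the $(\mathcal{P}_tf)^2$ term is discarded), but here it buys nothing.

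The case analysis on $|z_1|$ in your argument is created by the replacement $L^{r/2}(\eta(s))\to C_0L^{r/2}(z_1)$ before integrating. If you keep $L^{r/2}(\eta(s))$ inside the integral, you get $\int L^{r/2}(\eta)|\dot\eta|\le\int L^{r}(\eta)|\dot\eta|\le\rho_r(z_1,z_2)+\varepsilon$ directly and the $z_1$--dependence disappears, as does the need for the lemma $\sup_s L(\eta(s))\le C_0 L(z_1)$ (which, incidentally, is genuinely nontrivial once $|z_1|$ is large: the trivial bound $L^*(|z_1|+\delta)/L_*(|z_1|)$ is not uniform, and one really does need the exponential shrinking of the path to argue $L(\eta(s))/L(z_1)\to1$; your sketch waves at this but does not control it). A separate slip: $\rho(0,w)\le CL(w)$ is not right --- the straight--line bound gives $\rho(0,w)\le|w|L^*(w)\le CL^{\kappa}(w)$ with $\kappa>1$; it does not change the outcome because Lemma \ref{lyapunov} still decays the exponent, but it should be stated correctly.

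In summary: the proposal reaches the right conclusion by essentially the right lemmas, but you work too hard. Normalising at $0$ and leaving $L^{r}(\eta(s))$ in the path integral collapses the argument to the paper's few lines and removes both the variance bookkeeping and the split on $|z_1|$.
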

\begin{proof}
In this section we want to use the dual Lipschitz formulation of the Wasserstein 1 distance. We have that
\[ \mathcal{W}_{1,d}( \mathcal{P}_t \delta_{z_1}, \mathcal{P}_t \delta_{z_2}) = \sup_\phi (\mathcal{P}_t \phi(z_1) - \mathcal{P}_t \phi(z_2)).\] Here the infimum is taken over all Lipschitz $\phi$ with $|\phi|_{Lip}\leq1$. In fact by density and adding and subtracting we can take a supreme over $phi \in C^1$ with $\phi(0) = 0$. If $\phi$ is such a function then 
\[ |\phi(z)| \leq (1+ \beta)|z|L^*(z), |\nabla\phi(z)| \leq (1/\delta+\beta)L^*(z).  \] Therefore by lemma \ref{gradientbound} and lemma \ref{lyapunov1} we have that
\[ |\nabla \mathcal{P}_t \phi(z)| \leq L^{\kappa e^{-\beta t/4}}(z)(C + 3e^{-t/3}(1/\delta + \beta)) \] Therefore for $t$ sufficiently large so that $\kappa e^{-\beta/4} \leq r$ and $3e^{-t/3} \leq 1/4$ we have that
\[ |\nabla \mathcal{P}_t \phi(z) | \leq (\delta(C+2)+ 1/4) \frac{1}{\delta} L^r(z). \] Now take $\delta \leq 1/2(C+2)$ so we have
\[|\nabla \mathcal{P}_t \phi(z) |  \leq \frac{3}{4} \frac{1}{\delta} L^r(z). \]So we have that
\[ \mathcal{P}_t \phi (z_1) - \mathcal{P}_t \phi(z_2) \leq \int_0^1  \nabla \mathcal{P}_t \phi(\gamma(s)) \cdot \dot{\gamma}(s) \mathrm{d} s \leq \frac{3}{4} \frac{1}{\delta}\int_0^1 L^r(\gamma(s))|\dot{\gamma}(s)|\mathrm{d}s. \] For any path $\gamma$ joining $z_1$ and $z_2$. Therefore we have 
\[ \mathcal{P}_t \phi (z_1) - \mathcal{P}_t \phi(z_2) \leq \frac{3}{4} \frac{1}{\delta} \rho_r(z_1,z_2). \] Since $\rho_r(z_1, z_2) \leq \delta$ this means
\[ \mathcal{W}_{1,d}(\mathcal{P}_t \delta_{z_1}, \mathcal{P}_t \delta_{z_2}) \leq \frac{3}{4} d(z_1,z_2). \]
\end{proof}

\subsection{Assumption 3} Before starting we need some material from Malliavin calculus
\subsubsection{Malliavin Calculus} The material in this section is all standard and follows \cite{N95,H11}. Malliavin calculus is a way of `differentiating' a random variable whose randomness comes from some Brownian motion with respect to this Brownian motion. Since it is the driving Brownian motion which causes the diffusive behaviour of the solutions to SDEs, the Malliavin derivative allows us to measure the strength and direction of this diffusion. We will denote the Malliavin derivative of a function by
$\mathcal{D}F,$ this derivative is in fact a function and if $F$ is a functional of $W_s, 0 \leq s \leq t$ then the Malliavin derivative is a function on $[0,t]$ we denote the evaluation of this function at a particular time $s$ by $ \mathcal{D}_s F.$ We quickly introduce some of the definitions in Malliavin calculus. First we need to know what kind of functions can be differentiated.
Let \[ \Omega = C_0 = \{ f \hspace{5 pt}| \hspace{5pt} f \in C([0,T]^n; \mathbb{R}^d), f(0)=0  \}, \] be Wiener space, and $P$ the Wiener measure. Let $H$ be the Hilbert space $H=L^2([0,T])$. Then we define a simple type of Weiner functional
\[ W: H \rightarrow \mathbb{R}, \hspace{10 pt} W(h) = \int_0^T h(t) \mathrm{d}W_t \] by Ito integration. We have that $\mathcal{D}W(h) = h$. For each $h \in H, W(h)$ is a random variable. Let $\mathscr{G}$ be the sigma-algebra generated by $\{W(h) : h \in H\}$. We want to look a Weiner functionals which are in the Hilbert space $G$,
\[ G= L^2(\Omega, \mathscr{G}, P). \] The Malliavin derivative operator is $\mathcal{D}: G \rightarrow H$ is a closable, unbounded operator much like the weak derivative operator on $L^2$. Since, we are dealing mainly with SDEs we wish to know how to find the Malliavin derivative of the solution to an SDE. If we work purely formally we can derive an SDE for the Malliavin derivative to an SDE, writing in integral form we have
\[ Z_t = Z_0 + \sum_{k=1}^n\int_0^t A_k(Z_s)\mathrm{d}W_{k,s} + \int_0^t B(Z_s)\mathrm{d}s  \] then we can formally take derivatives
\[ \mathscr{D}^k_r Z_t = A_k(Z_r) + \sum_{j=1}^n\int_r^t \nabla A_j (Z_s) \cdot \mathscr{D}^k_r (Z_s) \mathrm{d}W_{j,s} + \int_r^t \nabla B(Z_s) \cdot \mathscr{D}^k_r(Z_s) \mathrm{d}s.\] Here the $k$ in the exponent corresponds to the Malliavin derivative with respect to the $k^{th}$ Brownian motion. The Malliavin derivative can be constructed rigorously and in the case that $A_k$ are smooth and uniformly Lipschitz it can be shown that $\mathscr{D}_r^k$ will satisfy this SDE, see \cite{N95, H11}.

We now wish to look at our solution in a different form. If we write the map
\[ \Phi_{s,t}^\omega (Z_s)=Z_t,  \] the solution map. Then we can differentiate with respect to the initial conditions to get
\[ \partial \Phi_{s,t} = J_{s,t}. \] Then we would like to write an SDE for $J_{s,t}$. Let us write
\[ J_{s,t}Z_s = Z_s + \int_s^t \nabla A_k(Z_r) \cdot J_{s,r}Z_s \mathrm{d}W_{k,r} + \int_s^t \nabla B(Z_r) \cdot J_{s,r} Z_s \mathrm{d}r. \] Comparing this with the SDE for $\mathscr{D}_s Z_t$ shows that, formally anyway, 
\[ \mathscr{D}_s Z_t = J_{s,t}A(Z_s).  \] Furthermore we can write an SDE for $J_{s,t}$ on its own in both Ito and Stratanovich form.
\begin{align*} J_{s,t}&= I + \sum_{k=1}^n\int_s^t \nabla A_{k}(Z_r)\cdot J_{s,r} \mathrm{d}W_{k,r}+\int_s^t \nabla B(Z_r) \cdot J_{s,r} \mathrm{d}r, \\
&= I + \sum_{k=1}^n \int_s^t \nabla A_k (Z_r) \cdot J_{s,r} \circ \mathrm{d}W_{k,r}+ \int_s^t \nabla A_0 (Z_r) \cdot J_{s,r} \mathrm{d}r. \end{align*}
We also notice that as \[ \Phi_{s,t}=\Phi_{r,t} \circ \Phi_{s,r} \] the chain rule gives us that
\[ J_{s,t} = J_{r,t}J_{s,r}. \] We can also show that $J_{s,t}$ is invertible by writing a suitable SDE for $J_{s,t}$ and showing that the solution will not blow up. This lack of blow up comes from global controls on the size of $\nabla A$ and $\nabla B$ which we would like to impose. This SDE is
\[ J_{s,t}^{-1}= I - \sum_{k=1}^n \int_s^t J_{s,r}^{-1}\nabla A_k (Z_r)\circ  \mathrm{d}W_{k,r} - \int_s^t J_{s,r}^{-1} \nabla B (Z_r) \mathrm{d}r.  \] Putting these two facts together gives that
\[ J_{s,t} = J_{0,t}J_{0,s}^{-1} \Rightarrow \mathscr{D}_s Z_t = J_{0,t} J_{0,s}^{-1} A(Z_s). \] This is useful because $J_{0,s}^{-1}A(Z_s)$ is a measurable function of $Z_r, r \leq s$ so we could write an SDE purely on this quantity. This will be useful later, we do this in Stratanovich form where $V$ is any smooth bounded vector field,
\begin{align*}
\circ \mathrm{d}\left( J_{0,t}^{-1} V(Z_t) \right) =& \left(\circ \mathrm{d} J_{0,t}^{-1}  \right) V(Z_t) + J_{0,t}^{-1} \left( \mathrm{d}V(Z_t) \right) \\
=& - \sum_{k=1}^n \nabla A_k(Z_t) J_{0,t}^{-1} V(Z_t) \circ \mathrm{d}W_t^{(k)} - \nabla A_0(Z_t) J_{0,t}^{-1} V(Z_t) \mathrm{d}t \\
&+ J_{0,t}^{-1} \nabla V(Z_t) \left[ \sum_{k=1}^n A_k(Z_t) \circ \mathrm{d}W^{(k)}_t + A_0(Z_t) \mathrm{d}t \right] \\
=& \sum_{k=1}^n J_{0,t}^{-1} [A_k, V](Z_t) \circ \mathrm{d}W^{(k)}_t + J_{0,t}^{-1}[A_0, V] (Z_t)\mathrm{d}t. 
\end{align*}
Converting this to Ito form gives
\[ \mathrm{d}\left(J_{0,t}^{-1} V(Z_t)\right)= \sum_{k=1}^n J_{0,t}^{-1}[A_k,V](Z_t)\mathrm{d}W^{(k)}_t + J_{0,t}^{-1}\left(\frac{1}{2} \sum_{k=1}^n [A_k, [A_k, V]](Z_t) + [A_0,V](Z_t)  \right)\mathrm{d}t. \]

We also need another important theorem from Malliavin calculus
\begin{thm}[Clark-Ocone Representation Formula]
If $F$ is Malliavin differentiable and $\mathbb{E}(F^2) < \infty, \mathbb{E}((\mathcal{D}_s F)^2) < \infty$ and $W$ is a Brownian motion with natural filtration $\mathscr{F}_t$ then,
\[ F= \mathbb{E}(F) + \int_0^t \mathbb{E}(\mathcal{D}_s F | \mathscr{F}_s)\mathrm{d}W_s. \]
\end{thm}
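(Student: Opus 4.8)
\section*{Proof proposal for the Clark--Ocone formula}

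The plan is to identify the integrand appearing in the martingale representation of $F$ with the adapted version of its Malliavin derivative, using the duality between $\mathcal{D}$ and the It\^{o} integral. Since $F \in L^2(\Omega, \mathscr{F}_t, P)$, the martingale representation theorem for Brownian filtrations produces a unique $\mathscr{F}_s$-adapted process $u \in L^2([0,t] \times \Omega)$ with
\[ F = \mathbb{E}(F) + \int_0^t u_s \, \mathrm{d}W_s. \]
It then suffices to prove that $u_s = \mathbb{E}(\mathcal{D}_s F \mid \mathscr{F}_s)$ for almost every $(s,\omega)$, and for this I would test both processes against an arbitrary bounded adapted process $v$.

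On one hand, by the It\^{o} isometry,
\[ \mathbb{E}\left( F \int_0^t v_s \, \mathrm{d}W_s \right) = \mathbb{E}\left( \int_0^t u_s v_s \, \mathrm{d}s \right). \]
On the other hand, the integration-by-parts (duality) formula of Malliavin calculus, $\mathbb{E}(F\,\delta(v)) = \mathbb{E}(\int_0^t (\mathcal{D}_s F) v_s \, \mathrm{d}s)$, applies because the Skorokhod integral $\delta$ restricted to adapted square-integrable integrands coincides with the It\^{o} integral; hence
\[ \mathbb{E}\left( F \int_0^t v_s \, \mathrm{d}W_s \right) = \mathbb{E}\left( \int_0^t (\mathcal{D}_s F) v_s \, \mathrm{d}s \right) = \mathbb{E}\left( \int_0^t \mathbb{E}(\mathcal{D}_s F \mid \mathscr{F}_s)\, v_s \, \mathrm{d}s \right), \]
where the last equality uses that $v_s$ is $\mathscr{F}_s$-measurable. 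Comparing the two displays gives $\mathbb{E}(\int_0^t (u_s - \mathbb{E}(\mathcal{D}_s F \mid \mathscr{F}_s)) v_s \, \mathrm{d}s) = 0$ for every bounded adapted $v$; since such $v$ are dense in the adapted subspace of $L^2([0,t]\times\Omega)$ and both $u$ and $s \mapsto \mathbb{E}(\mathcal{D}_s F \mid \mathscr{F}_s)$ belong to that subspace, they coincide almost everywhere, which is the claim.

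An alternative, more elementary route is to verify the identity first for smooth cylindrical functionals $F = f(W_{s_1}, \dots, W_{s_n})$ with $f$ and its derivatives of polynomial growth, where $\mathcal{D}_s F$ is an explicit finite sum of indicator-weighted partial derivatives and the formula reduces, after conditioning on the Brownian increments, to a telescoping sum of conditional expectations; one then extends to general $F$ by approximating in the graph norm of $\mathcal{D}$ and using closability of $\mathcal{D}$ together with the $L^2$-continuity of conditional expectation and of the It\^{o} integral. In either approach the probabilistic content is light and the main obstacle is purely functional-analytic bookkeeping: one must ensure the duality formula and the density/closability arguments are valid under exactly the stated hypotheses $\mathbb{E}(F^2) < \infty$ and $\mathbb{E}((\mathcal{D}_s F)^2) < \infty$. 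As this is a standard result, I would carry out only this verification and otherwise refer to \cite{N95, H11} for the details, recording the statement for use in the treatment of Assumption 3.
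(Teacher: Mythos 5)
The paper does not prove this theorem: it states the Clark--Ocone formula and simply refers the reader to \cite{N95} (and \cite{H11}) for a proof, since the result is standard and is only used as a black box in the treatment of Assumption~3. Your duality argument is correct and complete in outline: martingale representation yields a unique adapted integrand $u$, the Skorokhod integral $\delta$ agrees with the It\^{o} integral on square-integrable adapted processes, and the duality $\mathbb{E}(F\delta(v)) = \mathbb{E}\bigl(\int_0^t (\mathcal{D}_s F)\, v_s\, \mathrm{d}s\bigr)$ for $F\in\mathbb{D}^{1,2}$ together with the tower property identifies $u_s = \mathbb{E}(\mathcal{D}_s F \mid \mathscr{F}_s)$ after testing against a dense family of bounded adapted $v$. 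Note that the hypothesis written as $\mathbb{E}((\mathcal{D}_s F)^2) < \infty$ should really be read as $\mathbb{E}\bigl(\int_0^t (\mathcal{D}_s F)^2\,\mathrm{d}s\bigr) < \infty$, i.e.\ $F\in\mathbb{D}^{1,2}$, which is exactly what your duality step requires.

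Since the paper offers no proof to compare against, the only substantive comparison is with the cited source: the proof in \cite{N95} proceeds by Wiener chaos decomposition (verify the identity chaos by chaos, then pass to the limit), which is the route closer to your second, ``alternative'' sketch. Your primary duality-plus-martingale-representation argument is the version originally due to Ocone and is arguably more transparent probabilistically, at the cost of invoking the predictable representation property and the identification of $\delta$ with the It\^{o} integral on adapted integrands, both of which are themselves nontrivial (though standard) facts. Either route is acceptable here; the paper's choice to cite rather than prove is the right call for its purposes, and your proposal would be a correct replacement if one wished to make the section self-contained.
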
 This could be considered a version of the fundamental theorem of calculus in this context. A proof of this can be found in \cite{N95}.

\subsubsection{Back to Assumption 3}
Now we return to assumption 3. We are now in the setting of looking the the kinetic Fokker-Planck SDE
\[ \mathrm{d}X_t = V_t \mathrm{d}t, \qquad \mathrm{d}V_t = - V_t \mathrm{d}t - \nabla_x U (X_t) \mathrm{d}t + \mathrm{d}W_t. \] For this SDE we have that
$n=1$ and $A_1 = (0,1)$ and $B = (v, -v - \nabla_x U(x))$. We define $C_1$ by
\[ C_1 := [A_1, B](z) = \left( \begin{array}{c} -1 \\ 1 \end{array} \right).  \] The key idea of this sections is that we can use Malliavin calculus to show that for very small $t$ the solution behaves approximately like
\[ \mathbb{E}(Z_t) + A_1 W_t + C_1 \int_0^t s \mathrm{d}W_s. \] Then because $(W_t, \int_0^t s \mathrm{d}W_s)$ is a $2d$ dimensional non-degenerate Gaussian and because $A_1$ and $C_1$ are linearly independent  this shows that the solution spreads out in every direction. In particular if we take two independent realisations $Z^1_t$ and $Z_t^2$ with different starting points the solutions will spread in the direction $\mathbb{E}(Z^1_t) - \mathbb{E}(Z^2_t)$ which allows us to show there is some positive probability of them becoming close.
\begin{lemma} Let $U$ be smooth and satisfy $Hess(U) \leq M$ and fix $\delta$ and $R$. There exists $T=T(\delta,R)$ such that for fixed $0<t<T$ there exists an $\alpha=\alpha(t,\delta,R)$ with the property that for any two independent solutions to the SDE, $Z^1_t, Z^2_t$ with initial points having $z_1,z_2 \in B(0,R)$, then
\[ \mathbb{P}(|Z_t^1-Z_t^2| < \delta) \geq \alpha.  \]
We have that
\[ \alpha(t, \delta, R) = 1 - C \delta^2 \frac{1}{t_2} \exp\left( -\frac{k}{t^3}m^2 \right)+ 8\exp \left( -\frac{\delta^2}{16Ct^5}\right). \] Here $k$ and $m$ are explicit numerical constants. This value of $\alpha(t,\delta, R)$ is only positive for $t$ sufficiently small and $T$ is the value for which $\alpha(T,\delta, R) = 0$.
\end{lemma}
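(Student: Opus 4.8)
The plan is to make rigorous the heuristic quoted before the lemma: on a short window $[0,t]$ the solution is a non-degenerate Gaussian plus a small, \emph{uniformly} controlled remainder, and the Gaussian alone forces two independent copies to meet within $\delta$ with an explicit positive probability. I would run this through the Clark--Ocone formula combined with a short-time Taylor expansion of the Jacobian flow $J_{s,t}$.

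First I would apply Clark--Ocone coordinatewise, writing
\[ Z_t=\mathbb{E}(Z_t)+\int_0^t\mathbb{E}\big(\mathcal{D}_sZ_t\mid\mathscr{F}_s\big)\,\mathrm{d}W_s. \]
Since the noise coefficient $A_1=\left(\begin{smallmatrix}0\\ I\end{smallmatrix}\right)$ is constant, $\mathcal{D}_sZ_t=J_{s,t}A_1$, and $r\mapsto J_{s,r}A_1$ solves $\partial_r(J_{s,r}A_1)=\nabla B(Z_r)J_{s,r}A_1$ with value $A_1$ at $r=s$, where $\nabla B=\left(\begin{smallmatrix}0 & I\\ -\mathrm{Hess}(U) & -I\end{smallmatrix}\right)$. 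The structural fact that makes everything go through is that $\nabla B(z)A_1=\left(\begin{smallmatrix}I\\ -I\end{smallmatrix}\right)=-C_1$ does not depend on $z$: the Hessian of $U$ never meets $A_1$ at first order, so iterating once more gives $\|J_{s,r}A_1-A_1+(r-s)C_1\|\le c(1+M)(r-s)^2$ uniformly in $\omega$, the potential entering only through the bounded matrix $\mathrm{Hess}(U)$ and the constant bracket $C_1=[A_1,B]$, never through $\nabla^3U$ or the state. Taking conditional expectations yields $\mathbb{E}(\mathcal{D}_sZ_t\mid\mathscr{F}_s)=A_1-(t-s)C_1+E_s$ with $\|E_s\|\le c(1+M)(t-s)^2$, hence the decomposition
\[ Z_t=\mathbb{E}(Z_t)+N_t+R_t,\qquad N_t:=\int_0^t\big(A_1-(t-s)C_1\big)\mathrm{d}W_s,\qquad R_t:=\int_0^t\mathbb{E}(E_s\mid\mathscr{F}_s)\,\mathrm{d}W_s. \]

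Next I would analyse the Gaussian: in block form $N_t=\big(\int_0^t(t-s)\mathrm{d}W_s,\;W_t-\int_0^t(t-s)\mathrm{d}W_s\big)$, a centred Gaussian on $\mathbb{R}^{2d}$ whose covariance is the image of $\left(\begin{smallmatrix}t & t^2/2\\ t^2/2 & t^3/3\end{smallmatrix}\right)\otimes I_d$ under an invertible constant matrix; since $\det\left(\begin{smallmatrix}t & t^2/2\\ t^2/2 & t^3/3\end{smallmatrix}\right)=t^4/12>0$, $\mathrm{Cov}(N_t)$ is non-degenerate with determinant of order $t^{4d}$ and smallest eigenvalue of order $t^3$. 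For two independent copies $N_t^1-N_t^2$ is centred Gaussian with covariance $2\,\mathrm{Cov}(N_t)$; writing $\mu:=\mathbb{E}(Z_t^1)-\mathbb{E}(Z_t^2)$ and bounding $|\mu|\le|z_1-z_2|+\int_0^t|\mathbb{E}B(Z_s^1)-\mathbb{E}B(Z_s^2)|\mathrm{d}s\le m$ for an explicit $m=m(R,M)$ and $t\le1$ (using that $\nabla U$ grows at most linearly, since $\mathrm{Hess}(U)\le M$, together with the moment bounds behind Lemma \ref{lyapunov}), evaluating the Gaussian density on $B(-\mu,\delta/2)$ gives $\mathbb{P}(|N_t^1-N_t^2+\mu|<\delta/2)\ge C\delta^{2d}t^{-2d}\exp(-km^2/t^3)$ — positive but exponentially small in $t^{-3}$, the main term of $\alpha$. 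For the remainder, the integrand of $R_t$ is adapted and bounded by $c(1+M)t^2$, so $R_t$ is sub-Gaussian with variance proxy of order $t^5$, and the exponential martingale inequality gives $\mathbb{P}(|R_t^1-R_t^2|\ge\delta/2)\le 8\exp(-\delta^2/(16Ct^5))$. Subtracting,
\[ \mathbb{P}\big(|Z_t^1-Z_t^2|<\delta\big)\ge\mathbb{P}\big(|N_t^1-N_t^2+\mu|<\tfrac{\delta}{2}\big)-\mathbb{P}\big(|R_t^1-R_t^2|\ge\tfrac{\delta}{2}\big)=:\alpha(t,\delta,R), \]
which is the claimed expression; it is positive for small $t$ because $\exp(-\delta^2/(16Ct^5))$ decays much faster than $t^{-2d}\exp(-km^2/t^3)$ as $t\downarrow0$, and $T(\delta,R)$ is defined as its first zero.

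The main obstacle I expect is the uniform-in-$\omega$ estimate $\|E_s\|\le c(1+M)(t-s)^2$ in the second step: one must verify that the first-order Taylor remainder of $J_{s,t}A_1$ depends on the potential only through the bounded matrix $\mathrm{Hess}(U)$ and the constant vector $C_1=[A_1,B]$, and never through $\nabla^3U$ or the unbounded state $Z_s$. This is precisely the feature flagged in the introduction — that the vector fields $A_1$ and $[A_1,B]$ appearing in the bracket condition are constant; without it one would first have to localise onto a compact set, at the cost of a further small probability. The remaining difficulty is purely quantitative: lining up the powers of $t$ in the two competing exponentials — $t^{-3}$ from the weak (bracket) direction of the Gaussian against $t^{-5}$ from the remainder's quadratic variation — so that $\alpha$ is genuinely positive on a nonempty window $(0,T(\delta,R))$.
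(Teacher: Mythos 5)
Your proof follows essentially the same route as the paper: decompose $Z_t$ via Clark--Ocone into $\mathbb{E}(Z_t)$, a Gaussian $N_t$ obtained by first-order Taylor expansion of $J_{s,t}A_1$ (exploiting that $A_1$ and $C_1=[A_1,B]$ are constant), and a remainder $R_t$ controlled by the exponential martingale inequality, then balance the Gaussian lower bound against the remainder's tail. Your uniform-in-$\omega$ bound $\|E_s\|\le c(M)(t-s)^2$ is precisely the paper's ``alternative'' estimate $\|J_{s,t}\|\le e^{(1+M)(t-s)}$ from $\mathrm{Hess}(U)\le M$, and your final expression for $\alpha$ in fact corrects an evident sign typo in the paper's displayed formula, so the argument is sound and matches the intended proof.
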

\begin{proof} The key idea of this proof is to use the fact that the solution spreads out in every direction due to hypoelliptic effects. We represent the solution by a deterministic part, a Gaussian part and a small error. We begin by approximating the Malliavin derivative of the solution using the SDEs 
\begin{align*}
\frac{\mathrm{d}}{\mathrm{d}s} J_{s,t}A_1 &= J_{s,t}C_1,\\ \frac{\mathrm{d}}{\mathrm{d}s}J_{s,t}C_1 &= J_{s,t}C_1 - U''(X_s)J_{s,t}A_1
\end{align*} We can then Taylor expand and use the Clarke-Ocone formula to get
\[ Z_t = \mathbb{E}(Z_t) + \int_0^t \left(\left(\begin{array}{c} 0 \\ 1 \end{array}\right) - (t-s)\left( \begin{array}{c} -1 \\ 1 \end{array}\right) + E_{s,t}\right) \mathrm{d}W_s.  \] 
\[ E_{s,t} = -\mathbb{E} \left(\int_s^t (J_{r,t} C_1 - U''(X_r) J_{r,t} A_1)(t-r)\mathrm{d}r| \mathscr{F}_s\right)\]
 At this point we have to assume that $U''$ is bounded in order to get bounds on $E_{s,t}$. Using the first part with the Lyapunov structure we know that $J_{0,t}$ can be bounded in terms of Lyapunov function we have
 \[ \|J_{s,t}\| \leq e^{1+M} \exp \left(\eta \int_s^t (X_r^2 +V_r^2 +U(X_r) )\mathrm{d}r \right) \leq  e^{1+M} \exp \left(\eta \int_0^t (X_r^2 +V_r^2 +U(X_r) )\mathrm{d}r \right) \leq C L(Z_0).\] Taking the supremum over possible starting points in $B(0,R)$ we have $E_{s,t} \leq C(t-s)^2$ for some constant $C$.  Let us write
\[ \mathcal{E}_t = \int_0^t E_{s,t}\mathrm{d}W_s. \] We would like to get bounds on the expectation of $\exp{c|\mathcal{E}_t|}$. Since 
\[ \mathcal{E}_r = \int_0^r E_{s,t} \mathrm{d}W_s \] is a Martingale for $r \leq t$ then by the exponential martingale inequality 
\[  \mathbb{E} \left( \exp\left( \xi \cdot\mathcal{E}_t \right)\right) \leq \exp \left( \int _0^t C |\xi|^2 (t-s)^4  \mathrm{d}s \right) \leq \exp \left( C |\xi|^2 t^5  \right).\] 
Alternatively, we can bound $J_{s,t}$ in a way that doesn't depend on the initial data but does use that $Hess(U) \leq M$. We can use the equation to see that
\[ |J_{s,t}A_1 + J_{s,t}C_1|^2 \leq 4e^{(2+M)t}. \] Then the rest follows exactly as before but we replace $C$ with $Ce^{(2+M)t}$. Since we are looking at the asymptotics for small $t$ this makes no difference.

So we have decomposed $Z_t$ into a deterministic part $\mathbb{E}(Z_t)$ a Gaussian part which we call $G_t$ and an error which has exponential moments.
\begin{align*} \mathbb{P}\left( Z^1_t -Z^2_t \notin B(0,\delta) \right) \leq & \mathbb{P}\left( \mathbb{E}(Z^1_t)- \mathbb{E}(Z^2_t) + G^1_t-G^2_t \notin B(z, \delta/2) \right) \\ &+ \mathbb{P}\left( \mathcal{E}^1_t \notin B(0, \delta/4)\right)+ \mathbb{P}\left( \mathcal{E}^2 \notin B(0, \delta/4) \right).\end{align*} So we have by Markov's inequality
\[ \mathbb{P}\left( \mathcal{E}_t \notin B(0, \delta/2)\right) \leq 4 \exp \left(C \eta^2 t^5  - \eta \delta/2\right). \] Optimising over $\eta$ gives
\[ \mathbb{P} \left( \mathcal{E}_t \notin B(0, \delta/2) \right) \leq  4 \exp \left( - \frac{\delta^2}{16Ct^5} \right). \] We can write down the density for $G^1_t-G^2_t$. We have
\[ \frac{\mathrm{d}}{\mathrm{d}t} \mathbb{E}(|Z^1_t - Z^2_t|^2) \leq (2+M) \mathbb{E}(|Z^1_t-Z^2_t|^2) + 4d. \] This implies that
\[\mathbb{E}(|Z^1_t - Z_t^2|^2) \leq e^{(2+M)t}(\mathbb{E}(|Z^1_0-Z^2_0|^2)+4d) \leq e^{(2+M)t}(R^2+4d).  \]
We can therefore find the smallest that the density of $G^1_t-G^2_t$ can be on a ball of size $\delta/2$ at the point $-\mathbb{E}(Z_t^1 -Z_t^2)$ when $G^1$ and $G^2$ are independent. Using this we make the two processes independent.
The covariance matrix for $G^1_t, G^2_t$ has eigenvalues $(t/3 + o(t^3), t-t^2+t/3 + o(t^3))$. Lets call $\sigma_m(t)$ the smallest eigenvalue and $\sigma_M(t)$ the largest eigenvalue. We also have that $z \leq L(z)/a_*$ so using Lemma \ref{lyapunov} we have that $\mathbb{E}(Z_t^1-Z_t^2) \leq 2 C_* \max_{|z| \leq R}(L(z))/a_*=:m$ So we can bound the probability by
\[ 1- \delta^2 (2\pi\sigma_M(t))^{d/2}\exp \left( - \frac{m^2}{\sigma_m(t)} \right). \] We then have that
 can approximate for $t \leq 1$,
\[ \mathbb{P} \left( \mathbb{E}(Z^1_t-Z_t^2) + G^1_t-G^2_t \notin B(0, \delta/2)\right) \leq 1- C \delta^2 \frac{1}{t^2} \exp \left( - \frac{k}{t^3}m^2   \right).\] 

Here $k$ and $m$ are constants we can calculate explicitly. In total we have that
\[ \mathbb{P}(Z^1_t-Z^2_t \notin B(0, \delta)) \leq 1 - C \delta^2 \frac{1}{t_2} \exp\left( -\frac{k}{t^3}M^2 \right)+ 8\exp \left( -\frac{\delta^2}{16Ct^5}\right) \]So as $t \rightarrow 0$ we can see that for a fixed sufficiently small $t$ we have
\[ \mathbb{P}( Z_t \in B(z, \delta)) \geq \alpha. \] Where we can calculate $\alpha$ explicitly in terms of $t, \delta, R$ and the other constants appearing in the equation.
\end{proof}
\begin{lemma} \label{minorisation2}
Suppose we fix $\delta, t$ and $R$. Then there exists $\alpha$ such that for any two independent solutions to the SDEs $Z^1_t, Z^2_t$ with initial points having $z_1-z_2 \in B(0,R)$ then
\[ \mathbb{P}(|Z_t^1-Z_t^2| < \delta) \geq \alpha.  \] Furthermore if they start with initial points both in $B(0,R)$ then
\[ \mathbb{P}(\rho_r(Z_t^1, Z_t^2) < \delta) \geq \alpha'. \]
\end{lemma}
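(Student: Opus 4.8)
The plan is to prove the two assertions separately. The first is exactly the Euclidean-closeness estimate of the preceding lemma, but with the hypothesis ``$z_1,z_2\in B(0,R)$'' weakened to ``$z_1-z_2\in B(0,R)$''; the second is deduced from the first, the Lyapunov control of the tails of $\mathcal P_t\delta_z$, and the comparison between $\rho_r$ and the Euclidean distance recorded in the second Remark.

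\textbf{The first bound.} I would point out that the proof of the preceding lemma used the individual starting points $z_1,z_2$, rather than merely their difference, in only two places, and that both can be removed. The mean separation was already estimated there from $\frac{\mathrm d}{\mathrm dt}\mathbb E|Z^1_t-Z^2_t|^2\le(2+M)\mathbb E|Z^1_t-Z^2_t|^2+4d$, which uses only $\mathrm{Hess}(U)\le M$ and gives $|\mathbb E(Z^1_t)-\mathbb E(Z^2_t)|\le e^{(2+M)t/2}(R^2+4d)^{1/2}$ as soon as $z_1-z_2\in B(0,R)$; call this quantity $m$. The error term $\mathcal E^i_t=\int_0^t E_{s,t}\,\mathrm dW^i_s$ was, in its first treatment, controlled through a Lyapunov bound on $\|J_{s,t}\|$ that sees the starting point, but the preceding lemma also records the position-free bound $|J_{s,t}A_1+J_{s,t}C_1|^2\le 4e^{(2+M)t}$, again a consequence of $\mathrm{Hess}(U)\le M$ alone, which yields $|E_{s,t}|\le C(t-s)^2e^{(2+M)t}$ with $C$ absolute and hence $\mathbb E(\exp(\xi\cdot\mathcal E^i_t))\le\exp(C|\xi|^2t^5e^{2(2+M)t})$ uniformly in $z_i$. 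The leading Gaussian $G^i_t=\int_0^t(A_1-(t-s)C_1)\,\mathrm dW^i_s$ has deterministic covariance depending only on $t$, with smallest eigenvalue of order $t^3$ as computed there, so $G^1_t-G^2_t$ is again a nondegenerate Gaussian independent of the starting points. Decomposing $Z^1_t-Z^2_t=(\mathbb E Z^1_t-\mathbb E Z^2_t)+(G^1_t-G^2_t)+(\mathcal E^1_t-\mathcal E^2_t)$ and taking a union bound over the event that $G^1_t-G^2_t$ lands within $\delta/2$ of $-(\mathbb E Z^1_t-\mathbb E Z^2_t)$ and the two events $|\mathcal E^i_t|<\delta/4$ gives, verbatim with this $m$, the bound of the preceding lemma: $\mathbb P(|Z^1_t-Z^2_t|<\delta)\ge\alpha$ with $\alpha=\alpha(t,\delta,R)$ depending only on $t,\delta,R$ and the structural constants, and strictly positive exactly in the regime ($t$ small) where the Gaussian term dominates the $\exp(-c/t^5)$ error term.

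\textbf{The $\rho_r$ bound.} Here I would use that $L\ge1$ forces $\rho_r\le\rho$ and that $\rho(z_1,z_2)\le|z_1-z_2|\exp(M(|z_1|^2+|z_2|^2))$ for a suitable constant $M$ (second Remark). Fix a radius $R''$ and set $\delta':=\delta\exp(-2M(R'')^2)$; then on the event $\{|Z^1_t|,|Z^2_t|\le R''\}$ the inequality $|Z^1_t-Z^2_t|<\delta'$ forces $\rho_r(Z^1_t,Z^2_t)<\delta$, so
\[\mathbb P(\rho_r(Z^1_t,Z^2_t)<\delta)\ \ge\ \mathbb P(|Z^1_t-Z^2_t|<\delta')-\mathbb P(|Z^1_t|>R'')-\mathbb P(|Z^2_t|>R'').\]
The first term is $\ge\alpha(t,\delta',2R)$ by the first bound, since $z_1-z_2\in B(0,2R)$. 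For the tails, Markov's inequality together with $\mathbb E(L(\Phi_t(z)))\le L(z)^{e^{-\beta t/4}}\le L(z)$ from Lemma \ref{lyapunov} gives $\mathbb P(|Z^i_t|>R'')\le \bar L_R/L_*(R'')$, where $\bar L_R:=\sup_{|z|\le R}L(z)$ and $L_*(R'')=\exp(3a_*(R'')^2/4)$. Hence
\[\mathbb P(\rho_r(Z^1_t,Z^2_t)<\delta)\ \ge\ \alpha(t,\delta',2R)-2\bar L_R/L_*(R'')\ =:\ \alpha',\]
and for $R''$ large this is positive, because $\alpha(t,\delta',2R)$ shrinks only polynomially in $\delta'=\delta\exp(-2M(R'')^2)$ while $1/L_*(R'')$ decays like $\exp(-cR''^2)$.

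\textbf{Main obstacle and use in Assumption 3.} The delicate point is the bookkeeping in the last step: one must check that enlarging $R''$ to kill the Lyapunov tail does not destroy the positivity of $\alpha(t,\delta',2R)$ faster than $1/L_*(R'')$ vanishes, and, since $\alpha$ is positive only for small $t$, this confines $t$ to a range depending on $\delta$ and $R$, exactly as in the preceding lemma. Finally, to feed Assumption 3, which demands the estimate for every $T\ge T_0$ rather than for one fixed small $t$, I would precede the above by running the dynamics for time $T-t_0$ and using the Lyapunov contraction $\mathbb E(L(\Phi_s(z)))\le\sup_{|z'|\le C}L(z')^{e^{-\beta s/4}}$, which tends to $1$, to place both processes in a fixed ball with probability at least $1/2$ once $s=T-t_0$ is large; then apply this lemma over the increment $[T-t_0,T]$ conditionally on $\mathscr F_{T-t_0}$ via the Markov property, with $T_0:=\tau+t_0$.
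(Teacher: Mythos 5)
Your treatment of the first (Euclidean) bound is in the right spirit: the mean-separation estimate and the position-free bound on $J_{s,t}$ do depend only on $z_1-z_2$ and $\mathrm{Hess}(U)\le M$, so the preceding lemma's decomposition goes through with $z_1-z_2\in B(0,R)$. (The paper additionally iterates this over time steps of length $t_*$ to cover arbitrary fixed $t$ rather than only small $t$; you relocate that concern to the final paragraph, which is a valid but different organisation.)

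The genuine gap is in the $\rho_r$ step. You set $\delta'=\delta\exp(-2M(R'')^2)$, use a union bound to get $\mathbb P(\rho_r<\delta)\ge\alpha(t,\delta',2R)-2\bar L_R/L_*(R'')$, and then claim this is positive for $R''$ large because $\alpha(t,\delta',2R)$ ``shrinks only polynomially in $\delta'$''. That bookkeeping fails on two counts. First, $\alpha$ is not polynomial in $\delta'$ for fixed $t$: the lower bound has the form
\[ \alpha(t,\delta',2R)\ \sim\ \frac{C(\delta')^2}{t^2}\exp\!\left(-\frac{km^2}{t^3}\right)\ -\ 8\exp\!\left(-\frac{(\delta')^2}{16Ct^5}\right), \]
and the second term tends to $8$ (not $0$) as $\delta'\to 0$ with $t$ fixed, so $\alpha(t,\delta',2R)\to -8<0$. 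In other words the explicit error bound from the preceding lemma becomes vacuous when the target radius is small relative to $t$; positivity of $\alpha$ requires $t$ small \emph{depending on} $\delta'$, which you cannot re-negotiate once $t$ is fixed. Second, even if one pretends $\alpha\sim(\delta')^2$, the comparison you want is $\delta^2\exp(-4M(R'')^2)\gg\exp(-3a_*(R'')^2/4)$, i.e.\ $3a_*/4>4M$, but $a_*$ is constructed in Lemma \ref{lyapunov} to be small (of order $\beta c_1/(3+M)$), so this inequality typically points the wrong way.

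The structural reason is that the union bound ``closeness minus tails'' is too lossy: pushing $R''$ up to kill the Lyapunov tails forces $\delta'$ down at a Gaussian rate, which kills the closeness estimate first. The paper avoids this by \emph{not} letting $R''$ grow: it lower-bounds the joint probability $\mathbb P\bigl(Z^1_t-Z^2_t\in B(0,\delta),\,Z^1_t,Z^2_t\in B(0,R')\bigr)$ for a fixed, moderate $R'$ directly through the Gaussian decomposition (getting a bound $\ge C\delta^2 R' t^{-2}\exp(-K/t^3)-8\exp(-\delta^2/16Ct^5)$), then fixes $R''$ once and for all from the Lipschitz comparison $\rho_r(x,y)\le R''|x-y|$ on $B(0,R')$, and only then substitutes $\delta\mapsto\delta/R''$ --- a fixed positive constant. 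To repair your argument you should likewise choose $R'$ (and hence $R''$) as a fixed constant determined by $R$ and the distance comparison, and bound the joint event, rather than trying to make the tail probabilities vanish.

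Your closing paragraph (Lyapunov return to a fixed ball plus the Markov property to reduce arbitrary $T\ge T_0$ to a single small increment) is correct and is a clean way to serve Assumption 3; it parallels, but is distinct from, the paper's iterative $\mathbb P\ge ab^n$ argument inside the lemma.
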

\begin{proof}
 We want to extend the previous Lemma to larger times by showing that if two solutions start with $z^1-z^2 \in B(0,R)$ then they stay there with some positive probability. To do this we repeat the calculation but replacing $\delta$ by $R$ then since the two processes are independent the probability that their difference stay inside $B(0,R)$ is given by the first lemma. So we have for some $t_*$
\[ \mathbb{P}(Z^1_{t_*}-Z^2_{t_*} \in B(0, R) \hspace{5pt} | \hspace{5pt} Z^1_0- Z^2_0 \in B(0,R)) \geq b \] Therefore if $t = nt_* + s$ with $s \leq t_*$ then 
\[ \mathbb{P}( Z^1_t - Z^2_t \in B(0, \delta)\hspace{5pt}|\hspace{5pt} Z^1_t-Z^2_t \in B(0,R)) \geq ab^n. \] Here $a,b, t_*$  are explicitly calculable constants depending on $M, F$.
However, we in fact need to look at $\rho_r$ instead of the normal distance. In order to do this we need to look at 
\[ \mathbb{P}(Z^1_t-Z^2_t \in B(0, \delta), Z_t^1, Z_t^2 \in B(0, R')), \] for some $R'$. We have that
\begin{align*} \mathbb{P}( Z^1_t - Z^2_t \in B(0, \delta), Z^1_t, Z^2_t \in B(0, R')) =& \mathbb{P}( Z_t^1, Z_t^2 \in B(0,R'))\\ & - \mathbb{P}( Z^1_t - Z^2_t \notin B(0, \delta), Z^1_t, Z_t^1 \in B(0,R')) \end{align*}
So we bound
\begin{align*} \mathbb{P}( Z_t^1 -Z_t^2 \notin B(0, \delta), Z_t^1, Z_t^2 \in B(0,R')) \leq &\mathbb{P}(Z_t^1, Z_t^2 \in B(0,R)) - \\ &\mathbb{P}( \mathbb{E}(Z_t^1 -Z_t^2) + G_t^1 -G^2_t \in B(0, \delta/2), Z^1_t, Z^2_t \in B(0,R'))  \\&+ \mathbb{P}(\|E^1_t\| \leq \delta/4) + \mathbb{P}( \| E^2_t\| \leq \delta/4)\end{align*} Furthermore we have
\[\mathbb{P}( \mathbb{E}(Z_t^1 -Z_t^2) + G_t^1 -G^2_t \in B(0, \delta/2), Z^1_t, Z^2_t \in B(0,R')) \geq C \delta^2 R' \frac{1}{t^2} \exp \left( -K/t^3\right) \] for explicitly computable constants $C$ and $K$. So in the same way we have for all $t, R, R'$ there is $a(t,R,R', \delta) >0$ such that
\[ \mathbb{P}( Z^1_t- Z^2_t \in B(0, \delta), Z^1_t, Z^2_t \in B(0,R') \hspace{5pt}|\hspace{5pt} Z^1_0, Z^2_0 \in B(0,R)) \geq a(t,R,R',\delta). \] Then we can find an $R''$ such that on any optimal path between two points in $B(0,R')$ we have $L(\gamma(t)) \leq R''$ so this implies that for $x,y \in B(0,R')$ we have
\[\rho_r(x,y) = \inf_\gamma \int_0^t L(\gamma(t))\dot{\gamma}(t) \mathrm{d}t \leq \inf_\gamma R'' \int_0^t \dot{\gamma}(t) \mathrm{d}t = R''|x-y|.\] We mean that the two distances are equivalent on compact sets. So if $|x-y| \leq \delta/R''$ we have that $\rho_r(x,y) \leq \delta$ therefore

\[ \mathbb{P}( \rho_r(Z^1_t, Z^2_t) \leq \delta \hspace{5pt}|\hspace{5pt} Z^1_0, Z^2_0 \in B(0, R)) \geq a(t, R, R', \delta/R''). \]
\end{proof}
Now for this section we look again at how this shows contraction in the theorem of Hairer and Mattingly. We have that 
\begin{prop}\label{contractionm}
If $\rho(z_1, z_2) \leq 4C_1$ and $\rho_r(z_1, z_2) > \delta$ then there exists $\gamma$ such that
\[ \mathcal{W}_{1,d}(\mathcal{P}_t \delta_{z_1}, \mathcal{P}_t \delta_{z_2}) \leq \gamma d(z_1,z_2). \] 
\end{prop}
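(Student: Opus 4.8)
The plan is to couple the two evolutions \emph{independently}, use the minorisation-type estimate of Lemma \ref{minorisation2} to gain a definite factor in the truncated term $(\rho_r/\delta)\wedge 1$ of $d$, bound the remaining term $\beta\rho$ crudely via the Lyapunov estimate, and only at the very end choose $\beta$ small.

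First I would reduce to a compact set: on the region $\{\rho(z_1,z_2)\le 4C_1,\ \rho_r(z_1,z_2)>\delta\}$ both points lie in a fixed ball $B(0,R_*)$ with $R_*=R_*(C_1,\delta,r)$. Indeed, if $|z_1|$ were large, then an almost-optimal $\rho$-geodesic from $z_1$ to $z_2$ either leaves a fixed ball $B(0,R)$ — costing at least $C\int_R^{|z_1|}e^{mr^2}\,\mathrm{d}r$, the lower bound already used in the proof of Proposition \ref{contractionl} — or stays near $z_1,z_2$, in which case its $\rho$-length is at least $\delta\,\min L^{1-r}$ along it; in either case this exceeds $4C_1$ once $|z_1|$ is large, contradicting $\rho(z_1,z_2)\le 4C_1$. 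Moreover, since $\rho_r(z_1,z_2)>\delta$ the truncated term of $d$ equals $1$, so $d(z_1,z_2)=1+\beta\rho(z_1,z_2)$; in particular $d(z_1,z_2)\ge 1$.

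Next, fix $T$ large enough that Propositions \ref{contractionl} and \ref{contractiong} apply at time $T$, and take the independent coupling of two copies $Z^1_T,Z^2_T$ of the SDE started from $z_1,z_2$. Then
\[ \mathcal{W}_{1,d}(\mathcal{P}_T\delta_{z_1},\mathcal{P}_T\delta_{z_2})\le \mathbb{E}\!\left[\left(\tfrac{\rho_r(Z^1_T,Z^2_T)}{\delta}\wedge 1\right)\right]+\beta\,\mathbb{E}\!\left[\rho(Z^1_T,Z^2_T)\right]. \]
For the first term, apply Lemma \ref{minorisation2} with radius $R_*$ and threshold $\delta/2$ in place of $\delta$: there is $\alpha'>0$ with $\mathbb{P}\big(\rho_r(Z^1_T,Z^2_T)<\delta/2\big)\ge\alpha'$, whence $\mathbb{E}\big[(\rho_r(Z^1_T,Z^2_T)/\delta)\wedge 1\big]\le 1-\tfrac12\alpha'$. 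For the second term, the first inequality of Proposition \ref{contractionl} (equivalently Lemma \ref{lyapunov1} applied along an almost-optimal path) gives $\mathbb{E}\big[\rho(Z^1_T,Z^2_T)\big]\le C\rho(z_1,z_2)\le 4CC_1=:C_2$. Hence $\mathcal{W}_{1,d}(\mathcal{P}_T\delta_{z_1},\mathcal{P}_T\delta_{z_2})\le 1-\tfrac12\alpha'+\beta C_2$. Now choose $\beta<\min\{1,\ \alpha'/(4C_2)\}$; this is permitted because $r,\delta$ — hence $R_*,\alpha',C_2$ — were fixed before $\beta$, and Propositions \ref{contractionl} and \ref{contractiong} hold for every $\beta\in(0,1)$, so shrinking $\beta$ cannot spoil them. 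With this choice the right-hand side is at most $1-\tfrac14\alpha'<1$, and since $d(z_1,z_2)\ge 1$ we conclude $\mathcal{W}_{1,d}(\mathcal{P}_T\delta_{z_1},\mathcal{P}_T\delta_{z_2})\le\gamma\,d(z_1,z_2)$ with $\gamma:=1-\tfrac14\alpha'$; the same argument runs for any sufficiently large $t$, since Lemma \ref{minorisation2} holds at each fixed time.

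The only delicate points are bookkeeping ones. One must fix $\beta$ \emph{last} so that none of the constants defining $\gamma$ depends on it — which is exactly why it is recorded that Propositions \ref{contractionl} and \ref{contractiong} are uniform over $\beta\in(0,1)$ — and one must extract a margin strictly below $1$ in the truncated term, for which it is essential to invoke Lemma \ref{minorisation2} at the strictly smaller scale $\delta/2$ rather than at scale $\delta$ (at scale $\delta$ the favourable event only forces the truncated term to be $<1$, with no quantitative gap). Beyond this the statement contains no new difficulty: all the genuine work has already been done in Lemma \ref{minorisation2} (the Malliavin-calculus minorisation) and in the Lyapunov bound of Lemma \ref{lyapunov1}.
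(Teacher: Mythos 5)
Your proof follows the same overall strategy as the paper's: reduce to a fixed compact set using the incompatibility of $\rho\le 4C_1$ and $\rho_r>\delta$ at large $|z|$, pass to the independent coupling so that Lemma \ref{minorisation2} at the sharper scale $\delta/2$ produces a uniform gap $\tfrac12\alpha'$ in the truncated term, observe $d(z_1,z_2)\ge 1$, bound the residual $\beta\rho$ term by a fixed constant, and only then shrink $\beta$. Your emphasis on the order in which constants are fixed, and on the uniformity in $\beta\in(0,1)$ of Propositions \ref{contractionl} and \ref{contractiong}, is exactly the right bookkeeping and matches the paper.

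There is one genuine gap: the bound on the second term. You write that the first inequality of Proposition \ref{contractionl}, ``equivalently Lemma \ref{lyapunov1} applied along an almost-optimal path,'' gives $\mathbb{E}[\rho(Z^1_T,Z^2_T)]\le C\rho(z_1,z_2)$. But both of these are estimates for the \emph{synchronous} coupling: $\Phi_t(z_1)$ and $\Phi_t(z_2)$ are driven by the same Brownian motion, and the proof of Proposition \ref{contractionl} evolves an entire path from $z_1$ to $z_2$ under a single random flow $\Phi_t$, which requires a single noise realisation. You have deliberately taken $Z^1_T,Z^2_T$ independent in order to invoke Lemma \ref{minorisation2}, so neither estimate applies to $\mathbb{E}[\rho(Z^1_T,Z^2_T)]$ as you have set things up. The fix is the one the paper uses: since $\rho$ is a length metric, $\rho(Z^1_T,Z^2_T)\le\rho(0,Z^1_T)+\rho(0,Z^2_T)$, then $\rho(0,Z^i_T)\le|Z^i_T|\,L^*(Z^i_T)\le CL^{\kappa}(Z^i_T)$, and the Lyapunov bound of Lemma \ref{lyapunov} gives $\mathbb{E}[\rho(0,Z^i_T)]\le C L^{\kappa}(z_i)\le C_*$ uniformly over $z_i\in B(0,R_*)$. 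This yields $\mathbb{E}[\rho(Z^1_T,Z^2_T)]\le 2C_*$, a constant depending only on $R_*$, after which your choice $\beta\le\alpha'/(8C_*)$ (say) closes the argument exactly as you wrote it.
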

\begin{proof}
Suppose that we have that $\rho(z_1,z_2)_r \geq \delta$ and $\rho(z_1, z_2) \leq 4C_1$ then we have that $z_1,z_2$ are contained in some ball. There is some $R$ such that for $|z|\geq R$ we have 
\[ L^*(z)^r \leq \frac{\delta}{8C_1}L_*(z). \] Then as we discussed there is some $R'$ such that
\[ \int_R^{R'} L_*(r) \mathrm{d}r \geq 8C_1. \] Therefore if $|z_1|, |z_2|\geq R'$ and $\rho(z_1,z_2) \leq 4C_1$ then if $\gamma$ is a path such that
\[ \int_0^1 L(\gamma(s)) |\dot{\gamma}(s)|\mathrm{d}s \leq \rho(z_1, z_2) + \epsilon \] then $\gamma$ must not pass through $B(0,R)$. and for such a path 
\[ \rho_r(z_1,z_2) \leq \int_0^1 L^r(\gamma(s))|\dot{\gamma(s)}|\mathrm{d}s \leq \frac{\delta}{8C_1} \int_0^1 L(\gamma(s))|\dot{\gamma}(s)| \mathrm{d}s \leq \frac{\delta}{8C_1}(4C_1 + \epsilon). \] Since $\epsilon$ is arbitrary this shows that $\rho(z_1, z_2) \leq \delta$. Therefore if $\rho(z_1, z_2) \leq 4C_1$ and $\rho_r(z_1, z_2) \geq \delta$ we have that $z_1, z_2 \in B(0,R')$. Then for this $R'$ we can apply lemma \ref{minorisation2} to get that there is some $a$ such that if we make $Z^1, Z^2$ independent then we have
\[ \mathbb{P}( \rho_r(Z^1_t, Z^2_t) \leq \delta/2 \hspace{5pt}|\hspace{5pt} Z^1_0, Z^2_0 \in B(0, R')) \geq a. \] Using this we have for the independent coupling
\[\mathbb{E}( d(Z^1_t, Z^2_t)) \leq \frac{1}{2}\mathbb{P}(\rho(Z^1_t, Z^2_t) \leq \delta/2) + (1- \mathbb{P}(\rho(Z^1_t, Z^2_t) \leq \delta/2) )+ \beta \mathbb{E}(\rho(Z^1_t, Z^2_t)) \]
\[ \leq (1-a/2) + \beta (\mathbb{E}(\rho(0,Z^1_t)) + \mathbb{E}(\rho(0,Z^1_t))). \]
Now we can see that 
\[ \mathbb{E}(\rho(0, Z^1_t)) \leq \mathbb{E}(|Z^1_t|L^*(Z^1_t)) \leq CL^{\kappa}(z_1) \leq C_* \] since $z_1 \in B(0,R')$. So if we take $\beta \leq a/8C_*$ then we have that
\[  \mathbb{E}( d(Z^1_t, Z^2_t)) \leq 1-a/4 \leq (1-a/4) d(z_1, z_2).\]
\end{proof}
\bibliographystyle{abbrv}
\bibliography{thesis}{}
\end{document}